\documentclass[a4paper, 11pt]{amsart}

\usepackage{amsmath,amssymb,amsthm,bm}
\usepackage{tikz-cd}
\usepackage{dutchcal} 

\usepackage{graphicx}
\usepackage[T1]{fontenc}
\usepackage{newpxtext}  
\usepackage{newpxmath}  
\usepackage{multirow,multicol}

\usepackage{microtype}			
\usepackage{float}

\usepackage[colorlinks=true,linkcolor=blue,urlcolor=blue, citecolor=blue]{hyperref}

\usepackage[inner=2.3cm,outer=2.3cm, bottom=3.3cm]{geometry}

\newcommand{\mc}[1]{\mathcal{#1}}
\newcommand{\mr}[1]{\mathrm{#1}}
\newcommand{\mf}[1]{\mathfrak{#1}}
\newcommand{\mb}[1]{\mathbb{#1}}

\newcommand{\ideala}[1]{\langle{#1}\rangle}
\newcommand{\kh}[1]{\left(#1\right)}

\newcommand{\ssm}[1]{\sum_{#1}}

\newcommand{\td}[1]{\bm{#1}}
\newcommand{\tld}[1]{\tilde{#1}}
\newcommand{\tx}[1]{\text{#1}}

\newcommand{\fq}{\mathbb{F}_q}

\theoremstyle{definition}
\newtheorem{theorem}{Theorem}[section]
\newtheorem{lemma}[theorem]{Lemma}
\newtheorem{corollary}[theorem]{Corollary}
\newtheorem{proposition}[theorem]{Proposition}
\newtheorem{claim}{Claim}
\theoremstyle{definition}

\newtheorem{model}{Modeling}
\newtheorem*{problem}{Problem}

\usepackage{comment}
\usepackage[colorinlistoftodos]{todonotes}
\newtoggle{comments}
\toggletrue{comments}

\iftoggle{comments}{
	\newcommand{\pg}[2][]{\todo[color=green!30,#1]{\textsf{PG:} #2}}
}{
	\newcommand{\pg}[2]{}
	
}
\title{On the Relations Among Algebraic Models for the MinRank Problem}

\author{Peigen Li}
\address{Beijing Institute of Mathematical Sciences and Applications, China}
\email{lpg22@bimsa.cn}

\author{Siyong Tao}
\address{Tsinghua University, China}
\email{taosy22@mails.tsinghua.edu.cn}

\begin{document}
	
	\footnote{This work is supported by NSFC 126011023.}
	\begin{abstract}
		In this article, we study the relations between algebraic models for the MinRank problem, which is a well-known NP-hard problem. We establish connections between the minors modeling, the Kipnis-Shamir modeling, and the support minors modeling, providing insights into their equivalences and differences. Our results contribute to a deeper understanding of the algebraic structures underlying the MinRank problem and may inform future research on efficient algorithms for solving it.
	\end{abstract}
	\maketitle
	\section{Introduction}
	Throughout this paper, let $\fq$ denote the finite field of order $q$ ($q$ is a power of a prime number $p>0$). The MinRank problem serves as a security cornerstone for many post-quantum public-key cryptosystems, including, but not limited to, code-based \cite{gaborit2015complexity,loidreau2017new} and multivariate-based cryptography \cite{patarin1996hidden,kipnis1999unbalanced,ding2005rainbow}. The MinRank problem is formulated as follows. 
	\begin{problem}
		Given $k$ matrices $\{M_i\}_{i=1}^k$ of size $m\times n$ with all entries in $\fq$ and a positive integer $r<$ $\min\{m,n\}$, find a nontrivial vector $\td{a}=(a_1,\dots,$ $a_k)\in\fq^k$ such that
		\begin{equation*}
			\tx{rank}\,\kh{\td{M_a}:=\sum^k_{i=1}a_iM_i}\leq r.
		\end{equation*}
	\end{problem}
	Although Buss et al. proved that the MinRank problem is NP-hard in general \cite{buss1999computational}, various modeling approaches for solving it have continued to emerge.
	
	From the algebraic point of view, MinRank admits several standard polynomial modelings. A first approach uses the vanishing of all $(r+1)\times(r+1)$ minors of $\td{M_x}$. A second approach is the Kipnis--Shamir (KS) modeling, where low rank is encoded by the existence of a sufficiently large kernel and one introduces auxiliary variables for a basis of that kernel. A third approach, often called the support-minors modeling, introduces auxiliary variables describing a rank factorization and derives bilinear equations from maximal minors of an auxiliary support matrix.
	
	Understanding these relations is useful for both conceptual and practical reasons. Conceptually, it clarifies which algebraic attacks are genuinely different and which are simply different presentations of the same geometric condition. Practically, it helps explain when one formulation may yield a smaller system, a better elimination behavior, or a more convenient local chart for computation.
	
	In \cite{guo2022algebraic,GSM}, the authors found that the equations in Kipnis-Shamir modeling can be derived from the support minors modeling, it's actually the statement in Theorem \ref{thm:main thm} (1) in the following,
	and the equations in the minors modeling can also be derived from the Kipnis-Shamir modeling.
	
	Before stating the algebraic modelings precisely, we need one bit of terminology. Let $R=\fq[x_1,$ $\dots,x_k]$ be the ring of polynomials in $k$ variables $x_1,\dots,x_k$ with coefficients in $\fq$. We denote by $\fq^k$ the space of $k$-tuples of elements of $\fq$. Given any ideal $I\subseteq R$, we let
	\begin{equation*}
		\mb{V}(I)=\{\td{a}=(a_1,\dots,a_k)\in\fq^k:f(\td{a})=0\;\tx{for all}\;f\in I\}\subseteq\fq^k.
	\end{equation*}
	We call $\mb{V}(I)$ the affine algebraic set defined by $I$. Given a subset $V$ of $\fq^n$, the ideal
	\begin{equation*}
		\mb{I}(V)=\ideala{f\in R:f(\td{a})=0\;\tx{for all}\;\td{a}\in V}\subseteq R.
	\end{equation*}
	is called the vanishing ideal of $V$. The affine $\fq$-Nullstellensatz \cite[Theorem 2.3]{Null} states that for an affine algebraic set $\mb{V}(I)$ defined by some ideal $I\subseteq R$, we have
	\begin{equation*}
		\mb{I}(V)=I+\ideala{x_i^q-x_i:1\leq i\leq n}.
	\end{equation*}
	This means that for any ideal $I$ of $R$, $I+\ideala{x_i^q-x_i:1\leq i\leq n}=\mb{I}(\mb{V}(I))$ is a radical ideal. 
	
	Consider a MinRank instance $(n,m,r,M_1,\dots,M_k)$. $\td{a}=(a_1,\dots,a_k)$ is a solution of the MinRank problem if and only if all $(r+1)$-minors of the polynomial matrix $\td{M_x}$ simultaneously vanish on this point. Thus the Minors modeling which has been presented in \cite{Minors} is obtained by considering the algebraic system of all $(r+1)$ minors of $\td{M_x}$.
	\begin{model}[Minors Modeling]
		Let $I_{\tx{minors}}=I_{r+1}(\td{M_x})$ be the ideal of $R$ generated by all $(r+1)$-minors of the polynomial matrix $\td{M_x}$. The locus of the rank defect is the algebraic set $\mb{V}(I_{r+1}(\td{M_x}))$ in $\fq^k$. We denote it by $V_{\tx{minors}}$.
		\begin{align*}
			V_{\tx{minors}}=\{\td{a}\in\fq^k:\Delta(\td{a})=0\;\tx{for all}\;(r+1)\tx{-minors $\Delta$ of}\;\td{M_x}\}.
		\end{align*}
	\end{model}
	$\td{a}=(a_1,\dots,a_k)\in\fq^k$ is a solution of the MinRank problem if and only if there are at least $n-r$ linearly independent vectors in the right kernel of $\td{M_a}$. Equivalently, there is an $n\times (n-r)$ matrix $P$ over $\fq$ of rank $n-r$ such that $\td{M_a}P=\td{0}_{m\times(n-r)}$ (the zero matrix). If the first $n-r$ rows of $P$ form an invertible matrix $Q$, then $MPQ^{-1}=\td{0}_{m\times(n-r)}$.
	We can specialize $P$ with its first $n-r$ rows equal to the identity matrix $I_{n-r}$.
	
	\begin{model}[Kipnis-Shamir Modeling,\cite{KS}]
		We search for solutions $\td{a}\in\fq^k$ and $\td{b}=(b_{1,1},\dots,$ $b_{r,n-r})\in\fq^{r(n-r)}$ such that
		\begin{equation}\label{eq:KS sys}
			\td{M_a}\cdot\begin{pmatrix}
				I_{n-r}\\
				\tx{Var}(\td{b})
			\end{pmatrix}=\td{0}_{m\times (n-r)},\quad\tx{Var}(\td{b})=\begin{pmatrix}
				b_{1,1}&\cdots&b_{1,n-r}\\
				\vdots&\ddots&\vdots\\
				b_{r,1}&\cdots&b_{r,n-r}
			\end{pmatrix}.
		\end{equation}
		Let $R_{\tx{KS}}$ be the polynomial ring $\fq[x_1,\dots,x_k,y_{1,1},\dots,y_{r,n-r}]$, and $I_{\tx{KS}}$ be the ideal generated by
		\begin{equation}\label{eq:KS poly}
			f_{i,j}(\td{x},\td{y})=u_{i,j}(\td{x})+\sum^r_{l=1}u_{i,n-r+l}(\td{x})y_{l,j},\quad i\in[m],j\in[n-r]
		\end{equation}
		Here $u_{i,j}(\td{x})$ is the $(i,j)$-th entry of $\td{M_x}$.
		$f_{i,j}(\td{x},\td{y})$ is the $(i,j)$-th entry of the $m\times (n-r)$ matrix
		\begin{equation*}
			\td{M_x}\cdot\begin{pmatrix}
				I_{n-r}\\
				\tx{Var}(\td{y})
			\end{pmatrix},\quad\tx{Var}(\td{y})=\begin{pmatrix}
				y_{1,1}&\cdots&y_{1,n-r}\\
				\vdots&\ddots&\vdots\\
				y_{r,1}&\cdots&y_{r,n-r}
			\end{pmatrix}.
		\end{equation*}
		We denote by $V_{\tx{KS}}$ the set of all solutions $(\td{a},\td{b})\in\fq^k\times\fq^{r(n-r)}$ of the system \eqref{eq:KS sys}. We see that $V_{\tx{KS}}=\mb{V}(I_{\tx{KS}})$ (here $\fq^k\times\fq^{r(n-r)}$ is identified with $\fq^{k+n(r-n)}$).
	\end{model}
	
	If $(\td{a},\td{b})\in V_{\tx{KS}}$, then the right kernel of $\td{M_a}$ contains at least $n-r$ linearly independent vectors, so $\tx{rank}\,\td{M_a}\leq r$. However, there are some cases in which $\tx{rank}\,\td{M_a}\leq r$ for some $\td{a}\in\fq^r$ but we can't find any $\td{b}\in\fq^{r(n-r)}$ such that $(\td{a},\td{b})\in V_{\tx{KS}}$. If we write $\td{M_a}=(\td{M_{a,1}},\td{M_{a,2}})$ with $\td{M_{a,1}}$ is an $m\times (n-r)$ matrix and $\td{M_{a,2}}$ is an $m\times r$ matrix, the existence of such $\td{b}$ implies that
	\begin{equation*}
		\td{M_{a,1}}=-\td{M_{a,2}}\cdot \tx{Var}(\td{b}),\quad \td{M_a}=\td{M_{a,2}}\cdot B,\quad B=(-\tx{Var}(\td{b}),I_r).
	\end{equation*}
	The last $r$ columns of $\td{M_a}$ span the column space of $\td{M_a}$. And every row of $\td{M_a}$ can be expressed as a linear combination of rows of $B$. Let $(\td{M_a})_{i,*}=(u_{i,1}(\td{a}),\dots,u_{i,n}(\td{a}))$ be the $i$-th row of $\td{M_a}$. Then the $(r+1)\times n$ matrix below has rank $r$.
	\begin{equation}\label{eq:supp sys}
		B_i=\begin{pmatrix}
			u_{\td{a},i}\\
			B
		\end{pmatrix}.
	\end{equation}
	Therefore, all maximal $(r+1)$-minors of $B_i$ are equal to 0. Expanding along the first row of every maximal minor yields terms formed by $u_{i,j}(\td{a})$ multiplied by a maximal $r$-minor of $B$. For integers $r,n\geq 0$, we use the notation $\mc{P}_r(n)$ to denote the collection of all subsets of $[n]=\{1,\dots,n\}$ of size $r$. For $T\in\mc{P}_r(n)$, let $c_T$ be the maximal minor of $B$ corresponding to the columns that belong to $T$. For $S\in\mc{P}_{r+1}(n)$, the maximal minor of $B_i$ corresponding to the columns that belong to $S$ is equal to zero, then
	\begin{equation*}
		\ssm{j\in S}(-1)^{\tx{sgn}(j,S)}u_{i,j}(\td{a})c_{S\setminus\{j\}}=0.
	\end{equation*}
	Here $\tx{sgn}(j,S)=\#\{i\in S:i<j\}$ ($\#$ denotes the cardinality of the set). Moreover, for $i\in[r]$ and $j$ $\in[n-r]$, let $T_{i,j}$ be the subset $\{j,n-r+1,\dots,n\}\setminus\{n-r+i\}$.
	\begin{equation*}
		c_T=\begin{cases}
			1&T=T_0=\{n-r+1,\dots,n\}\\
			(-1)^ib_{i,j}&T=T_{i,j}.
		\end{cases}.
	\end{equation*}
	Take $S=S_j=\{j,n-r+1,\dots,n\}$ in \eqref{eq:supp sys}. $u_{i,j}(\td{a})+\sum^{n-r}_{l=1}(-1)^lu_{i,n-r+l}(\td{a})c_{T_{l,j}}=0$ directly implies that $f_{i,j}(\td{a},\td{b})=0$. These considerations lead to the following algebraic modeling, which is proposed in \cite{GSM}:
	
	\begin{model}[Support Minors Modeling]\label{mod:SM}
		Define the polynomial ring $R_{\tx{supp}}^{\tx{Gb}}=\fq[x_1,\dots,x_k,C_T:T\in\mc{P}_r(n)]$ by introducing new variable $C_T$ for each subset $T\in\mc{P}_r(n)$. We consider the ideal $I_{\tx{supp}}^{\tx{Gb}}$ generated by the bilinear polynomials 
		\begin{equation}\label{eq:SM poly}
			h_{i,S}(\td{x},\td{C})=\ssm{j\in S}(-1)^{\tx{sgn}(j,S)}u_{i,j}(\td{x})C_{S\setminus\{j\}},\quad i\in[m],S\in\mc{P}_{r+1}(n).
		\end{equation}
		Find $\td{a}\in\fq^k$ and $\td{c}=(c_T)_{T\in\mc{P}_r(n)}\in\fq^N$ ($N=\binom{n}{r}$) such that
		\begin{equation*}
			(\td{a},\td{c})\in V^{\tx{Gb}}_{\tx{supp}}=\mb{V}(I^{\tx{Gb}}_{\tx{supp}})\setminus(\fq^k\times\{\td{0}_N\}),\quad\td{0}_N=(0,\dots,0)\in\fq^N.
		\end{equation*}
	\end{model}
	
	The superscript $^{\tx{Gb}}$ means that we do not impose a condition that $c_{T_0}=1$. We will show that any solution $(\td{a},\td{c})\in V^{\tx{Gb}}_{\tx{supp}}$ can give the solution $\td{a}\in V_{\tx{minors}}$ via projection on the first $k$ coordinates, see Proposition \ref{pro:Vsup and Vmin}. We also consider the algebraic set 
	\begin{equation*}
		V_{\tx{supp}}=V^{\tx{Gb}}_{\tx{supp}}\cap \mb{V}(\ideala{C_{T_0}-1})=\mb{V}(I^{\tx{Gb}}_{\tx{supp}}+\ideala{C_{T_0}-1}).
	\end{equation*}
	For $(\td{a},\td{c})\in V_{\tx{supp}}$, we set $b_{i,j}=(-1)^ic_{T_{i,j}}$ for $i\in[r]$ and $j\in[n-r]$. Then $(\td{a},\td{b})\in V_{\tx{KS}}$, here $\td{b}=(b_{1,1},$ $,\dots,b_{r,n-r})$. Thus we get a correspondence between $V_{\tx{KS}}$ and $V_{\tx{supp}}$. Let $\sigma:\fq^k\times \fq^{N}\to\fq^k\times\fq^{N-1}$ be the projection map sending $(\td{a},\td{c})$ to $(\td{a},(c_T)_{T\in\mc{P}_r(n)\setminus \{T_0\}})$. $V_{\tx{supp}}$ is isomorphic to $\sigma(V_{\tx{supp}})=\mb{V}(I_{\tx{supp}})$, where $I_{\tx{supp}}\subseteq R_{\tx{supp}}=R[C_T:T\in\mc{P}_r(n)\setminus\{T_0\}]$ is obtained by setting $C_{T_0}=1$ in all elements of $I_{\tx{supp}}^{\tx{Gb}}$. 
	
	The relations between $V_{\tx{minors}},V_{\tx{KS}},V^{\tx{Gb}}_{\tx{supp}}$ and $V_{\tx{supp}}$ are clearly. However, they are all discrete and reducible algebra sets over the finite field $\fq$, so we cannot directly use the Hilbert Nullstellensatz to deduce the relationship between their vanishing ideals and defining ideals $I_{\tx{minors}},I_{\tx{KS}},I_{\tx{Supp}}^{\tx{Gb}}$ and $I_{\tx{supp}}$.
	
	\begin{theorem}\label{thm:main thm}\,
		\begin{enumerate}
			\item  $I_{\tx{minors}}\subseteq I_{\tx{KS}}\cap R$. Moreover, assume that $\Delta\neq 0$ is an $r$-minor of $\td{M_x}$ with columns indexed by $\{n-r+1,\dots,n\}$. Then $(I_{\tx{minors}})_\Delta=(I_{\tx{KS}})_\Delta\cap R_\Delta$.
			\item Let $\Phi:\tld{R}_{\tx{supp}}=R[C_{T_{i,j}}:i\in[m],j\in[n-r]]\to R_{\tx{KS}}$ be the $\fq$-algebra isomorphism which sends $C_{T_{i,j}}$ to $(-1)^iy_{i,j}$. We have $\Phi(I_{\tx{supp}}\cap\tld{R}_{\tx{supp}})=I_{\tx{KS}}$.
			\item For any $T\in\mc{P}_r(n)$, $C_T^{r+1}I_{\tx{minors}}\subseteq I^{\tx{Gb}}_{\tx{supp}}$, and
			\begin{align*}
				&I_{\tx{minors}}+\ideala{x_i^q-x_i:i\in[k]}\\=&\kh{(I^{\tx{Gb}}_{\tx{supp}}+\ideala{x_i^q-x_i,C_T^q-C_T:i\in[k],T\in\mc{P}_r(n)}):\ideala{C_T:T\in\mc{P}_r(n)}}\cap R.
			\end{align*}
		\end{enumerate}
	\end{theorem}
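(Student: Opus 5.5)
For (1), write $\td{M_x}=(M_1',M_2')$ with $M_1'$ the first $n-r$ columns and $M_2'$ the last $r$ columns, and set $Y=\begin{pmatrix}I_{n-r}\\ \tx{Var}(\td y)\end{pmatrix}$. Then $F=\td{M_x}Y$ is the matrix of the $f_{i,j}$. Let $E$ be the unimodular $n\times n$ matrix
\begin{equation*}
E=\begin{pmatrix}I_{n-r}&0\\ \tx{Var}(\td y)&I_r\end{pmatrix},
\end{equation*}
so that $\td{M_x}E=(F,M_2')$. Ideals of minors are invariant under multiplication by invertible matrices, hence $I_{r+1}(\td{M_x})R_{\tx{KS}}=I_{r+1}((F,M_2'))$. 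Expanding an $(r+1)$-minor of $(F,M_2')$ by Laplace along the columns coming from $F$, every term contains at least one entry of $F$, unless all $r+1$ columns come from $M_2'$, which has only $r$ columns. Hence this ideal lies in $I_{\tx{KS}}$, which gives $I_{\tx{minors}}\subseteq I_{\tx{KS}}\cap R$.

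For the localized equality, invert $\Delta=\det$ of the $r\times r$ submatrix $D$ of $M_2'$ on rows $\rho$. (Here I assume $\Delta$ is the minor on a fixed row set $\rho$.) Over $R_\Delta$, I would solve the rows $\rho$ of the KS equations explicitly: $\tx{Var}(\td y)\equiv -D^{-1}(M_1')_{\rho}$ modulo $I_{\tx{KS}}$. Thus $(R_{\tx{KS}})_\Delta/(I_{\tx{KS}})_\Delta$ is $R_\Delta$ modulo the remaining equations with $\td y$ substituted. These equations are, after Cramer's rule, $\Delta^{-1}$ times the $(r+1)$-minors built from rows $\rho\cup\{i\}$ and columns $\{j,n-r+1,\dots,n\}$. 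Every such minor lies in $I_{\tx{minors}}$, so the elimination ideal $(I_{\tx{KS}})_\Delta\cap R_\Delta$ is contained in $(I_{\tx{minors}})_\Delta$. Together with the first inclusion, this gives equality.

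For (2), I would first apply $\Phi$ and compare generators. The $h_{i,S}$ with $S=S_j$ specialize to $\pm f_{i,j}$ by the computation before Modeling~\ref{mod:SM}, so $I_{\tx{KS}}\subseteq\Phi(I_{\tx{supp}}\cap\tld R_{\tx{supp}})$. For the reverse inclusion, I would show that modulo $I_{\tx{supp}}$ every variable $C_T$ with $T\neq T_0,T_{i,j}$ is congruent to a polynomial in the $C_{T_{i,j}}$ and $\td x$. Specifically, $C_T$ should be congruent to the corresponding $r$-minor of $B=(-\tx{Var}(\td y),I_r)$. Then $R_{\tx{supp}}/I_{\tx{supp}}\cong\tld R_{\tx{supp}}/J$, and it remains to check that $J$ is generated by $\Phi^{-1}$ of the $f_{i,j}$. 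That amounts to showing that the generalized Plücker-type identities $h_{i,S}$ with $C_T$ replaced by the minors of $B$ lie in the ideal generated by the $f_{i,j}$. This last check uses the same Laplace expansion as in (1), since the rows of $\td{M_x}$ lie in the row span of $B$ modulo $I_{\tx{KS}}$. I expect the main obstacle of the whole proof to be the first step: that the relations $h_{i,S}$ with $C_{T_0}=1$ actually force $C_T\equiv c_T(B)$ \emph{without} extra Plücker relations. If this fails, I would instead directly prove $\Phi(I_{\tx{supp}}\cap\tld R_{\tx{supp}})\subseteq I_{\tx{KS}}$ by an elimination argument.

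For (3), the first claim follows from a Cramer/Laplace identity. Multiply an $(r+1)$-minor of $\td{M_x}$ by $C_T^{r+1}$. Modulo $I^{\tx{Gb}}_{\tx{supp}}$, the relations $h_{i,S}$ with $S\supseteq T$ let us replace $C_T u_{i,j}$ by a combination of $u_{i,t}$, $t\in T$, with coefficients $C_{T\setminus t\cup j}$. So $C_T$ times each row of $\td{M_x}$ is congruent to a combination of its own columns indexed by $T$. Doing this for each of the $r+1$ rows expresses the minor, times $C_T^{r+1}$, as a determinant whose columns lie in an $r$-dimensional span, hence zero. For the ideal equality, both sides contain $\ideala{x_i^q-x_i}$ and are radical, the left by the $\fq$-Nullstellensatz. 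The right side is radical as well, since it is a colon of a radical ideal intersected with $R$. So it suffices to compare varieties over $\fq$. The right side vanishes exactly on the projection of $V^{\tx{Gb}}_{\tx{supp}}$, and the left side on $V_{\tx{minors}}$. These are equal: ($\subseteq$) by $C_T^{r+1}I_{\tx{minors}}\subseteq I^{\tx{Gb}}_{\tx{supp}}$ with some $c_T\neq0$, and ($\supseteq$) by taking $c$ to be the Plücker coordinates of an $r$-dimensional row space containing the rows of $\td{M_a}$. The vanishing ideal of the projection is $\mb I(V^{\tx{Gb}}_{\tx{supp}})$ saturated by the $C_T$, intersected with $R$; over finite fields, saturation agrees with colon after adding field equations.
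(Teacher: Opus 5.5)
Your arguments for (1) and (3) are sound and essentially follow the paper. You use the same factorization $\mathbf{M}_{\mathbf{x}}E=(F,M_2')$ and the same Cramer/Schur-complement elimination after inverting $\Delta$. You also use the same column reduction for $C_T^{r+1}I_{\mathrm{minors}}\subseteq I^{\mathrm{Gb}}_{\mathrm{supp}}$, and the same two inclusions behind $\pi(V^{\mathrm{Gb}}_{\mathrm{supp}})=V_{\mathrm{minors}}$. Routing the ideal equality through the $\mathbb{F}_q$-Nullstellensatz, instead of the paper's radical manipulations, is equivalent.

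The gap is in (2). For the reverse inclusion you plan to show that, modulo $I_{\mathrm{supp}}$ alone, each $C_T$ with $T\notin\{T_0\}\cup\{T_{i,j}\}$ is congruent to an element of $\tilde R_{\mathrm{supp}}$, so that $R_{\mathrm{supp}}/I_{\mathrm{supp}}\cong\tilde R_{\mathrm{supp}}/J$. This is false whenever $r\ge 2$ and $n-r\ge 2$. Every $g_{i,S}$ is a combination of the linear forms $u_{i,j}(\mathbf{x})$, so $I_{\mathrm{supp}}\subseteq\langle x_1,\dots,x_k\rangle$. Reducing a relation $C_T-p\in I_{\mathrm{supp}}$, with $p\in\tilde R_{\mathrm{supp}}$, modulo $\langle\mathbf{x}\rangle$ would give $C_T=p(\mathbf{0},(C_{T_{i,j}}))$ in the polynomial ring $\mathbb{F}_q[C_{T'}:T'\neq T_0]$. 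That is impossible; geometrically, $(\mathbf{0},\mathbf{c})\in V_{\mathrm{supp}}$ for every $\mathbf{c}$. So the obstacle you anticipated is real. Your fallback ``elimination argument'' is precisely the content of (2), and it is left unspecified.

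What is missing is that you never need these congruences modulo $I_{\mathrm{supp}}$. It suffices to push elements of $I_{\mathrm{supp}}\cap\tilde R_{\mathrm{supp}}$ through a retraction. The sign convention in $B_{\mathbf{Y}}$ gives $|(B_{\mathbf{Y}})_{*,T_{i,j}}|=Y_{T_{i,j}}$ and $|(B_{\mathbf{Y}})_{*,T_0}|=1$. Hence the substitution $C_T\mapsto|(B_{\mathbf{Y}})_{*,T}|$ is a surjective $R$-algebra map $\psi\colon R_{\mathrm{supp}}\to\tilde R_{\mathrm{supp}}$ that restricts to the identity on $\tilde R_{\mathrm{supp}}$, and $\Phi\circ\psi$ is the map $\Psi$ of Bardet et al. Laplace expansion along the first row gives $\psi(g_{i,S})=|(B_{\mathbf{Y},i})_{*,S}|$. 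Write $B_{\mathbf{y}}=(-\mathrm{Var}(\mathbf{y}),I_r)=\Phi(B_{\mathbf{Y}})$. Then for $f\in I_{\mathrm{supp}}\cap\tilde R_{\mathrm{supp}}$,
\[
\Phi(f)=\Phi(\psi(f))\in\Phi\bigl(\psi(I_{\mathrm{supp}})\bigr)=\sum_{i=1}^m I_{r+1}\begin{pmatrix}(\mathbf{M}_{\mathbf{x}})_{i,*}\\ B_{\mathbf{y}}\end{pmatrix}\subseteq I_{\mathrm{KS}}.
\]
The last inclusion is exactly the check you already sketched: modulo $I_{\mathrm{KS}}$, row $i$ of $\mathbf{M}_{\mathbf{x}}$ lies in the row span of $B_{\mathbf{y}}$, and then Claim 2 applies.

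The paper phrases the same idea as a sandwich. It adds $\langle C_T-|(B_{\mathbf{Y}})_{*,T}|\rangle$, which it identifies with the Grassmann--Pl\"ucker ideal (that identification is not needed here). It then computes the contraction of the enlarged ideal by Claim 3 as $\langle g_{i,S_j}\rangle$. Finally it squeezes $I_{\mathrm{supp}}\cap\tilde R_{\mathrm{supp}}$ between $\langle g_{i,S_j}\rangle$ and that contraction. With this argument in place of the false congruence, your proof of (2) goes through.
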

	The first theorem states the relations among the three ideals defined by different algebraic models. The second theorem states the relations among the algebraic sets defined by different algebraic models.
	
	The paper is organized as follows. In Sect. \ref{sec:preliminaries}, we introduce some notations and preliminaries. In Sect. \ref{sec:relations}, we study the relations among the ideals and algebraic sets defined by different algebraic models.

	\section{Notations and Preliminaries}\label{sec:preliminaries}
	We denote the ring of $m\times n$ matrices over $R$ by $\tx{Mat}_{m\times n}(R)$. A $1\times n$ (resp. $m\times 1$) matrix is called a row vector (resp. a column vector). We view $R^m$ as the module of column vectors of size $m$. The zero matrix of size $m\times n$ is denoted by $\td{0}_{m\times n}$ and the identity matrix of size $n\times n$ is denoted by $I_n$. We can also define the ring of $m\times n$ matrices over a commutative ring with a unit element $1$ in a similar way. Let $A=(a_{i,j})_{1\leq i\leq m,1\leq j\leq n}\in\tx{Mat}_{m\times n}(R)$. 
	\begin{itemize}
		\item For an ideal $I$ of $R$, $A\bmod I$ is the matrix $(a_{i,j}\bmod I)_{1\leq i\leq m,1\leq j\leq n}\in\tx{Mat}_{m\times n}(R/I)$.
		\item When $m=n$, $|A|$ is the determinant of the square matrix.
		\item Given any subsets $T_1\subseteq[m]$ and $T_2\subseteq[n]$, $A_{T_1,T_2}$ is the submatrix of $A$ formed from rows in $T_1$ and columns in $T_2$, i.e., $A_{T_1,T_2}=(A_{i,j})_{i\in T_1,j\in T_2}$. When $T_1=[m]$ (resp. $T_2=[n]$), we shall use the notation $A_{*,T_2}$ (resp. $A_{T_1,*}$).
		\item Fix an integer $r>0$. $I_r(A)$ is the ideal of $R$ generated by all $r$-minors of $A$, i.e.,
		\begin{equation*}
			I_r(A)=\ideala{|A_{T_1,T_2}|:T_1\in\mc{P}_r(m),T_2\in\mc{P}_r(n)}\subseteq R.
		\end{equation*}
		Especially, $I_1(A)$ is the ideal generated by all the entries of $A$.
	\end{itemize}
	
	\begin{claim}\label{clm:minor}
		Given $A=(a_{i,j})_{1\leq i,j\leq n}\in\tx{Mat}_{n\times n}(R)$ and $\td{b}=(b_1,\cdots,b_n)^T$, let $\tx{adj}(A)=(c_{i,j})_{1\leq i,j\leq n}$ be the adjugate of $A$. $(-1)^{i+j}c_{i,j}$ is the $(n-1)$-minor obtained from $A$ by leaving out the $j$-th row and the $i$-th column (we also call it the $(n-1)$-minor of $a_{i,j}$ in $A$). For $1\leq i\leq n$, we have
		\begin{equation*}
			\sum^n_{j=1}c_{i,j}b_j=(-1)^{i-1}\begin{vmatrix}
				b_1&a_{1,1}&\cdots&a_{1,i-1}&a_{1,i+1}&\cdots&a_{1,n}\\
				b_2&a_{2,1}&\cdots&a_{2,i-1}&a_{2,i+1}&\cdots&a_{2,n}\\
				\vdots&\vdots&\ddots&\vdots&\vdots&\ddots&\vdots\\
				b_n&a_{n,1}&\cdots&a_{n,i-1}&a_{n,i+1}&\cdots&a_{n,n}.
			\end{vmatrix}
		\end{equation*}
	\end{claim}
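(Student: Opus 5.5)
We prove Claim \ref{clm:minor} by reading the left-hand side as a cofactor expansion. By the convention in the claim, $(-1)^{i+j}c_{i,j}$ is the $(n-1)$-minor of $A$ obtained by deleting row $j$ and column $i$. Equivalently, $c_{i,j}$ is the cofactor of the entry $a_{j,i}$, which is exactly the usual adjugate convention $\mathrm{adj}(A)=(\text{cofactor matrix})^T$.

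\textbf{Step 1: replace a column.} Let $A^{(i)}$ be the matrix obtained from $A$ by replacing its $i$-th column by $\td{b}$. Expand $|A^{(i)}|$ along its $i$-th column. The entry in position $(j,i)$ is $b_j$. Its cofactor is $(-1)^{i+j}$ times the minor obtained by deleting row $j$ and column $i$. This minor does not involve the $i$-th column, so it coincides with the corresponding minor of $A$, and the cofactor is therefore $c_{i,j}$. Hence
\[
\sum_{j=1}^n c_{i,j}b_j=|A^{(i)}|.
\]
This is the familiar Cramer's rule identity $\mathrm{adj}(A)\td{b}=(|A^{(1)}|,\dots,|A^{(n)}|)^T$. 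It holds over any commutative ring, in particular over $R$.

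\textbf{Step 2: move the column to the front.} Move the column $\td{b}$ from position $i$ to position $1$ by $i-1$ successive adjacent column transpositions. Each transposition changes the sign of the determinant, so $|A^{(i)}|=(-1)^{i-1}$ times the displayed determinant. The displayed matrix has first column $\td{b}$, followed by the columns of $A$ in their original order with column $i$ omitted. This gives the claim.

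\textbf{Expected obstacle.} The only point needing care is the index convention. Deleting row $j$ and column $i$ corresponds to the $(j,i)$ entry, so the sign $(-1)^{i+j}$ must match the cofactor position $(j,i)$ in the expansion of Step 1. I will check this against the adjugate convention stated in the claim. No result from the paper is needed beyond Laplace expansion and the alternating property of the determinant.
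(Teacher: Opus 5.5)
Your argument is correct. Expanding the determinant of $A$ with its $i$-th column replaced by $\td{b}$ along that column gives exactly $\sum_j c_{i,j}b_j$ (Cramer's rule, valid over any commutative ring). The $i-1$ adjacent column swaps then produce the sign $(-1)^{i-1}$.

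The paper gives no proof of this claim and treats it as a standard determinant identity, so your argument is the standard justification it implicitly relies on. You are also right to read the deleted-row-$j$, deleted-column-$i$ minor as belonging to the entry $a_{j,i}$; the paper's parenthetical ``minor of $a_{i,j}$'' is a slip.
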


	\begin{claim}\label{clm:det}
		Fix a matrix $A=(a_{i,j})_{1\leq i,j\leq n}\in\tx{Mat}_{n\times n}(R)$. $\td{a}_1,\dots,\td{a}_n$ are columns of $A$. Suppose that there exist column vectors $\{\td{b}_i\}_{i=1}^r\subseteq R^n$ with $r<n$ generating $\{\td{a}_i\}_{i=1}^n$, i.e.,
		\begin{equation*}
			\td{a}_i=\sum^r_{j=1}u_{i,j}\td{b}_j,\quad u_{i,j}\in R.
		\end{equation*}
		Then $|A|=0$. Actually, $A$ can written in the form
		\begin{equation*}
			A=\begin{pmatrix}
				\td{b}_1&\cdots&\td{b}_r&\td{0}_n&\cdots\td{0}_n
			\end{pmatrix}\begin{pmatrix}
				u_{1,1}&u_{2,1}&\cdots&u_{n,1}\\
				\vdots&\vdots&\ddots&\vdots\\
				u_{1,r}&u_{2,r}&\cdots&u_{n,r}\\
				0&0&\cdots&0\\
				\vdots&\vdots&\ddots&\vdots\\
				0&0&\cdots&0
			\end{pmatrix}.
		\end{equation*}
		The two matrices on the right are of size $n\times n$. By the product property of determinants we see that $|A|=0$. Moreover, given column vectors $\{\td{c}_{i_l}\}_{l=1}^m\subseteq R^n$ with $1\leq i_1<\cdots<i_m\leq n$, let $A_1$ be the matrix with columns $\td{a}_1,\dots,\td{a}_{i_1}+\td{c}_{i_1},\dots,\td{a}_{i_m}+\td{c}_{i_m},\dots,\td{a}_n$, we have $|A_1|\in I=\sum^m_{l=1}I_1(\td{c}_{i_l})$ (by our notation, $I_1(\td{c}_{i_l})$ is the ideal of $R$ generated by all entries of the vector $\td{c}_{i_l}$), this is because
		\begin{equation*}
			|A_1|\bmod I=|A_1\bmod I|=|A\bmod I|=|A|\bmod I=0.
		\end{equation*}
	\end{claim}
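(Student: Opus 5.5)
The plan is to prove both assertions of Claim \ref{clm:det} using only two facts: the determinant is multiplicative over a commutative ring, and it commutes with ring homomorphisms. No field structure is needed. This matters because we will apply the claim over $R$ and over quotients $R/I$, which are not fields.

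\textbf{First assertion.} Let $B$ be the $n\times n$ matrix whose first $r$ columns are $\td{b}_1,\dots,\td{b}_r$ and whose remaining $n-r$ columns are $\td{0}_n$. Let $U$ be the $n\times n$ matrix whose $(j,i)$-entry is $u_{i,j}$ for $j\le r$, and whose rows $r+1,\dots,n$ are zero.
\begin{itemize}
\item The $i$-th column of $BU$ is $\sum_{j=1}^r u_{i,j}\td{b}_j+\sum_{j>r}0\cdot\td{0}_n=\td{a}_i$, so $A=BU$.
\item Since $r<n$, $B$ has at least one zero column, so expanding along that column gives $|B|=0$.
\item The identity $|BU|=|B|\,|U|$ holds for matrices over any commutative ring with $1$. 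One way to see this is that it is a polynomial identity in the entries with integer coefficients, already valid over $\mathbb{Z}[\text{entries}]$.
\end{itemize}
Hence $|A|=|B|\,|U|=0$.

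\textbf{Second assertion.} Put $I=\sum_l I_1(\td{c}_{i_l})$ and let $\pi:R\to R/I$ be the quotient map, applied entrywise to matrices.
\begin{itemize}
\item The determinant is an integer polynomial in the matrix entries, so $\pi(|M|)=|\pi(M)|$ for every square matrix $M$ over $R$.
\item Every entry of each $\td{c}_{i_l}$ lies in $I$, so $\pi(\td{a}_{i_l}+\td{c}_{i_l})=\pi(\td{a}_{i_l})$. The other columns of $A_1$ and $A$ coincide, hence $\pi(A_1)=\pi(A)$.
\end{itemize}
Therefore
\[
\pi(|A_1|)=|\pi(A_1)|=|\pi(A)|=\pi(|A|)=\pi(0)=0,
\]
using the first assertion for $|A|=0$, and so $|A_1|\in I$.

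The only point needing care is that both steps work over an arbitrary commutative ring, namely multiplicativity and compatibility of $|\cdot|$ with reduction modulo an ideal. Both hold because they are formal polynomial identities, so there is no real obstacle.
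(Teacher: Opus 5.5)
Your proof is correct and follows the paper's argument: you factor $A=BU$ with $B$ having $n-r>0$ zero columns to get $|A|=|B|\,|U|=0$, and you reduce modulo $I$ to get $|A_1|\bmod I=|A\bmod I|=|A|\bmod I=0$. Your remarks that multiplicativity and compatibility with $R\to R/I$ hold over any commutative ring only make explicit what the paper uses implicitly.
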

	
	\begin{claim}\label{clm:I cap R}
		Consider an ideal $I=\ideala{f_1,\dots,f_m}$ in the polynomial ring $R[y_1,\dots,y_n]$ in indeterminates $y_i$. If there exist elements $u_1,\dots,u_n\in R$ such that $y_i-u_i\in I$, then $I\cap R=\ideala{f_i(u_1,\dots,u_n):1\leq i\leq m}$. Actually, we expand the polynomial $f_i$ after making the substitution $y_i=y_i-u_i+u_i$, then we get $f_i=f_i(u_1,\dots,u_n)+g_i(y_1-u_1,\dots,y_n-u_n)$, where $g$ is a polynomial without any constant term. So $\ideala{f_i(u_1,\dots,u_m):1\leq i\leq m}\subseteq I\cap R$. For the reverse inclusion, if $u=\sum^m_{i=1}g_if_i\in I\cap R$, evaluating the right-hand side on $(u_1,\dots,u_n)$ yields $u=\sum^m_{i=1}g_i(u_1,\dots,u_n)f_i(u_1,\dots,u_n)$.
	\end{claim}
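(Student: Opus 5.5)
The plan is to reduce everything to a substitution homomorphism. Write $\td{u}=(u_1,\dots,u_n)$ and let $\varphi:R[y_1,\dots,y_n]\to R$ be the $R$-algebra homomorphism sending $y_i\mapsto u_i$. Its restriction to $R$ is the identity. Set $J=\ideala{f_i(\td{u}):1\leq i\leq m}=\ideala{\varphi(f_i):1\leq i\leq m}\subseteq R$. I will prove the two inclusions $J\subseteq I\cap R$ and $I\cap R\subseteq J$ separately.

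For $J\subseteq I\cap R$, I first show that $g-\varphi(g)\in\ideala{y_1-u_1,\dots,y_n-u_n}$ for every $g\in R[y_1,\dots,y_n]$. It suffices to check this on monomials $y^\alpha$ with $R$-coefficients. For a monomial it follows from the telescoping identity
\[
y_1^{\alpha_1}\cdots y_n^{\alpha_n}-u_1^{\alpha_1}\cdots u_n^{\alpha_n}=\sum_{l=1}^n u_1^{\alpha_1}\cdots u_{l-1}^{\alpha_{l-1}}\,(y_l^{\alpha_l}-u_l^{\alpha_l})\,y_{l+1}^{\alpha_{l+1}}\cdots y_n^{\alpha_n},
\]
since each $y_l^{\alpha_l}-u_l^{\alpha_l}$ is divisible by $y_l-u_l$. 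This is the precise form of the paper's remark that $f_i=f_i(\td{u})+g_i(y_1-u_1,\dots,y_n-u_n)$ with $g_i$ having no constant term. By hypothesis each $y_l-u_l\in I$, so $\varphi(f_i)=f_i-(f_i-\varphi(f_i))\in I$. Since $\varphi(f_i)$ also lies in $R$, we get $\varphi(f_i)\in I\cap R$, and therefore $J\subseteq I\cap R$.

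For the reverse inclusion, take $u\in I\cap R$ and write $u=\sum_{i=1}^m g_if_i$ with $g_i\in R[y_1,\dots,y_n]$. Applying the ring homomorphism $\varphi$ gives $u=\varphi(u)=\sum_i\varphi(g_i)\varphi(f_i)$. The first equality holds because $u\in R$ and $\varphi$ fixes $R$. Each $\varphi(g_i)$ lies in $R$, so $u\in J$.

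The only point needing care is that $\varphi$ is a well-defined ring homomorphism fixing $R$. This holds by the universal property of polynomial rings, because $R$ is commutative. No step is a genuine obstacle; the main work is the bookkeeping in the first inclusion, which the monomial identity above handles.
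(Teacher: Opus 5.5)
Your proposal is correct and follows the paper's argument. The first inclusion is the paper's expansion $f_i=f_i(u_1,\dots,u_n)+g_i(y_1-u_1,\dots,y_n-u_n)$, which you make precise with the telescoping monomial identity; the reverse inclusion is exactly the paper's step of evaluating $\sum_i g_if_i$ at $y_i=u_i$, phrased as applying the substitution homomorphism $\varphi$.
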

	
	\section{Relations}\label{sec:relations}
	Consider a MinRank instance $\{M_i\}_{i=1}^k\in\tx{Mat}_{m\times n}(\fq)$ with target rank $r$, let $R=\fq[x_1,\dots,x_n]$ and $\td{M_x}=\sum^k_{i=1}x_iM_i$. $u_{i,j}(\td{x})\in R$ is the $(i,j)$-th entry of $\td{M_x}$. The sets of solutions of three modelings ($V_{\tx{KS}},V_{\tx{minors}},V_{\tx{supp}}^{\tx{Gb}}$ and $V_{\tx{supp}}$) are related to three algebraic sets defined by different ideals in polynomial rings. 
	
	\begin{enumerate}
		\item $V_{\tx{minors}}=\mb{V}(I_{\tx{minors}})\subseteq\fq^k$, where $I_{\tx{minors}}=I_{r+1}(\td{M_x})\subseteq R$.
		\item $V_{\tx{KS}}=\mb{V}(I_{\tx{KS}})\subseteq \fq^k\times\fq^{r(n-r)}$, where $I_{\tx{KS}}=\ideala{f_{i,j}:i\in [m],j\in [n-r]}\subseteq R_{\tx{KS}}=R[y_{1,1},\dots,y_{r,n-r}]$ and $f_{i,j}$ is as in \eqref{eq:KS poly}.
		\item $V^{\tx{Gb}}_{\tx{supp}}=\mb{V}(I^{\tx{Gb}}_{\tx{supp}})\setminus(\fq^k\times\{\td{0}_N\})\subseteq\fq^k\times\fq^N$, where $N=\binom{n}{r}$, $I^{\tx{Gb}}_{\tx{supp}}=\ideala{h_{i,S}:i\in[m],S\in\mc{P}_{r+1}(n)}\subseteq$ $ R^{\tx{Gb}}_{\tx{supp}}=R[C_T:T\in\mc{P}_r(n)]$ and $h_{i,S}$ is as in \eqref{eq:SM poly}.
		\item Set $T_0=\{n-r+1,\dots,n\}\in\mc{P}_r(n)$. $V_{\tx{supp}}=\mb{V}(I_{\tx{supp}}^{\tx{Gb}}+\ideala{C_{T_0}-1})$.
	\end{enumerate}
	\subsection{Relations between $I_{\mr{KS}}$ and $I_{\mr{minors}}$}
	\begin{proposition}
		Arrange $f_{i,j}$ in a matrix:
		\begin{equation*}
			F:=\begin{pmatrix}
				f_{1,1}&\cdots&f_{1,n-r}\\
				\vdots&\ddots&\vdots\\
				f_{m,1}&\cdots&f_{m,n-r}
			\end{pmatrix}
			=\td{M_x}\cdot\begin{pmatrix}
				I_{n-r}\\
				\tx{Var}(\td{y})
			\end{pmatrix},\quad\tx{Var}(\td{y})=\begin{pmatrix}
				y_{1,1}&\cdots&y_{1,n-r}\\
				\vdots&\ddots&\vdots\\
				y_{r,1}&\cdots&y_{r,n-r}
			\end{pmatrix}
		\end{equation*}
		We have the following conclusions:
		\begin{enumerate}
			\item $I_{\tx{minors}}\subseteq I_{\tx{KS}}\cap R$.
			\item Assume that $\Delta\neq 0$ is an $r$-minor of $\td{M_x}$ with columns indexed by $\{n-r+1,\dots,n\}$. Then we have $(I_{\tx{minors}})_\Delta=(I_{\tx{KS}})_\Delta\cap R_\Delta$.
		\end{enumerate}
	\end{proposition}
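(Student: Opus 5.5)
The approach is a column operation on $\td{M_x}$ by a unimodular matrix over $R_{\tx{KS}}$. Set
\begin{equation*}
U=\begin{pmatrix} I_{n-r}&\td{0}_{(n-r)\times r}\\ \tx{Var}(\td{y})&I_r\end{pmatrix},
\end{equation*}
which is an $n\times n$ matrix of determinant $1$ over $R_{\tx{KS}}$. Then
\begin{equation*}
\td{M_x}U=(F,\;\td{M_{x,2}}),
\end{equation*}
where $\td{M_{x,2}}$ is the last $r$ columns of $\td{M_x}$.

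For part (1), take any $(r+1)$-minor of $\td{M_x}$, with rows $T_1$ and columns $T_2$. By Cauchy--Binet, the $(r+1)$-minors of $\td{M_x}U$ generate the same ideal as those of $\td{M_x}$, since $U$ is invertible over $R_{\tx{KS}}$. More directly, write $\td{M_x}=(F,\td{M_{x,2}})U^{-1}$. Every column of $\td{M_x}$ equals a column of $F$ plus an $R_{\tx{KS}}$-combination of the columns of $\td{M_{x,2}}$, because column $j\le n-r$ of $\td{M_x}$ is $F_{*,j}-\sum_l y_{l,j}(\td{M_{x,2}})_{*,l}$.

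Now restrict to the rows $T_1$ and apply Claim \ref{clm:det}. Modulo $I_1(F)\subseteq I_{\tx{KS}}$, the $r+1$ chosen columns all lie in the span of the $r$ columns of $(\td{M_{x,2}})_{T_1,*}$. Hence the minor is $0$ modulo $I_{\tx{KS}}$. Since the minor lies in $R$, it lies in $I_{\tx{KS}}\cap R$.

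For part (2), the inclusion $\subseteq$ follows from (1) after localizing. For $\supseteq$, let $D=(\td{M_x})_{T_1',T_0}$ with $|D|=\Delta$. The equations $F_{T_1',*}=0$ read $D\,\tx{Var}(\td{y})=-(\td{M_x})_{T_1',[n-r]}$. Over $R_\Delta$, multiplying by $\Delta^{-1}\tx{adj}(D)$ gives
\begin{equation*}
y_{l,j}-u_{l,j}\in(I_{\tx{KS}})_\Delta,\qquad u_{l,j}=-\Delta^{-1}\bigl(\tx{adj}(D)(\td{M_x})_{T_1',[n-r]}\bigr)_{l,j}\in R_\Delta.
\end{equation*}
By Claim \ref{clm:minor}, $\Delta u_{l,j}$ equals, up to sign, the $r$-minor of $\td{M_x}$ on rows $T_1'$ and columns $T_0$ with column $n-r+l$ replaced by column $j$.

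Apply Claim \ref{clm:I cap R} over the base ring $R_\Delta$. This gives
\begin{equation*}
(I_{\tx{KS}})_\Delta\cap R_\Delta=\ideala{f_{i,j}(\td{x},\td{u})}.
\end{equation*}
It remains to show each $f_{i,j}(\td{x},\td{u})$ lies in $(I_{\tx{minors}})_\Delta$. The key computation is
\begin{equation*}
\Delta f_{i,j}(\td{x},\td{u})=\Delta u_{i,j}(\td{x})-\sum_l u_{i,n-r+l}(\td{x})\,\bigl(\tx{adj}(D)(\td{M_x})_{T_1',[n-r]}\bigr)_{l,j}.
\end{equation*}
Up to sign, this is the Laplace expansion of the $(r+1)$-minor of $\td{M_x}$ on rows $T_1'\cup\{i\}$ and columns $T_0\cup\{j\}$. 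That expansion is taken along row $i$, using Claim \ref{clm:minor} to identify the adjugate terms as the cofactors.

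When $i\in T_1'$, the row is repeated and the value is $0$, which matches $f_{i,j}(\td{x},\td{u})=0$ for those rows. Hence $f_{i,j}(\td{x},\td{u})\in\Delta^{-1}I_{r+1}(\td{M_x})\subseteq(I_{\tx{minors}})_\Delta$.

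The main obstacle is the sign bookkeeping in this last identification, matching the adjugate expression with the cofactor expansion when row $i$ is placed among the rows of $T_1'$. I would handle it by placing row $i$ on top and column $j$ first, as in Claim \ref{clm:minor}, so that only a global sign appears. That sign does not matter for ideal membership.

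A second subtlety is that the statement only says "an $r$-minor with columns $T_0$", so the rows $T_1'$ must be fixed at the start. Claim \ref{clm:I cap R} then needs the variables $y_{l,j}$ to satisfy $y_{l,j}-u_{l,j}\in(I_{\tx{KS}})_\Delta$ over the localized base ring. This holds because the proof of that claim only uses the substitution, which works over any commutative base ring.
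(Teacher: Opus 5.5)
Your proposal is correct and follows essentially the same route as the paper. In part (1) you use the column operation $\td{M_x}U=(F,\td{M_{x,2}})$. In part (2) you solve $F_{T_1',*}=0$ for $\tx{Var}(\td{y})$ over $R_\Delta$ via $\Delta^{-1}\tx{adj}(D)$, apply Claim \ref{clm:I cap R} with base ring $R_\Delta$, and identify $\Delta f_{i,j}(\td{x},\td{u})$ as the $(r+1)$-minor on rows $T_1'\cup\{i\}$ and columns $T_0\cup\{j\}$, or $0$ when $i\in T_1'$.

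The differences are cosmetic. In (1) you argue modulo $I_1(F)$ via Claim \ref{clm:det}, whereas the paper uses $I_{r+1}(\td{M_x})\subseteq I_{r+1}((F,\td{M_{x,2}}))$ and notes that every such minor meets a column of $F$. You should also rename your substituted values, since $u_{l,j}$ already denotes the entries of $\td{M_x}$.
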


	\begin{proof}
		Suppose that $\td{M_x}=(\td{M_{x,1}},\td{M_{x,2}})$ with $\td{M_{x,1}}\in\tx{Mat}_{m\times (n-r)}(R)$ and $\td{M_{x,2}}\in\tx{Mat}_{m\times r}(R)$. Then 
		\begin{align*}
			\td{M_x}&=\begin{pmatrix}
				\td{M_{x,1}}+\td{M_{x,2}}\cdot\tx{Var}(\td{y})&\td{M_{x,2}}
			\end{pmatrix}\cdot\begin{pmatrix}
				I_{n-r}&0_{(n-r)\times r}\\
				-\tx{Var}(\td{y})&I_r
			\end{pmatrix}\\&=\begin{pmatrix}
				F&\td{M_{x,2}}
			\end{pmatrix}\cdot\begin{pmatrix}
				I_{n-r}&0_{(n-r)\times r}\\
				-\tx{Var}(\td{y})&I_r
			\end{pmatrix}
		\end{align*}
		Hence we have the inclusion $I_{r+1}(\td{M_x})\subseteq I_{r+1}((F,\td{M_{x,2}}))$ (see \cite[pp. 740, (1)]{Lang}). Any $(r+1)\times(r+1)$ submatrix of $(F,\td{M_{x,2}})$ must involve one of the first $n-r$ columns. We get
		\begin{equation*}
			I_{\tx{minors}}=I_{r+1}(\td{M_x})\subseteq I_{r+1}((F,\td{M_{x,2}}))\cap R\subseteq I_1(F)\cap R=I_{\tx{KS}}\cap R.
		\end{equation*}
		
		Now we prove $(2)$. Suppose that $\Delta=|(\td{M_x})_{T_1,T_0}|$ with $T_1\in\mc{P}_r(m)$ and $T_0=\{n-r+1,\dots,n\}$. Let $T_2=\{1,\dots,n-r\}$. Then
		\begin{equation*}
			(\td{M_x})_{T_1,T_0}+(\td{M_x})_{T_1,T_2}
			\tx{Var}(\td{y})=F_{T_1,*}.
		\end{equation*}
		So $y_{i,j}+\frac{v_{i,j}(\td{x})}{\Delta}\in (I_{\tx{KS}})_{\Delta}$, where $v_{i,j}(\td{x})$ is the $(i,j)$-th entry of $\tx{adj}((\td{M_x})_{T_1,T_0})(\td{M_x})_{T_1,T_2}$. By Claim \ref{clm:I cap R},
		\begin{equation*}
			(I_{\tx{KS}})_{\Delta}\cap R_\Delta=\ideala{u_{i,j}(\td{x})-\sum^r_{l=1}u_{i,n-r+l}(\td{x})\frac{v_{l,j}(\td{x})}{\Delta}:i\in[m],j\in[n-r]}.	
		\end{equation*}
		By Claim \ref{clm:minor}, $(-1)^{i-1}v_{i,j}(\td{x})$ is the determinant of the matrix made by removing the $(i+1)$-th column of the $r\times(r+1)$ matrix $(
		((\td{M_x})_{T_1,T_2})_{*,j},(\td{M_x})_{T_1,T_0})$. Note that $((\td{M_x})_{T_1,T_2})_{*,j}$, the $j$-th column of $(\td{M_x})_{T_1,T_2}$, is exactly $(\td{M_x})_{T_1,j}$, as $j\in T_2$. Thus
		\begin{equation*}
			u_{i,j}(\td{x})\Delta-\sum^r_{l=1}u_{i,n-r+l}(\td{x})v_{l,j}(\td{x})=
			\begin{vmatrix}
				u_{i,j}(\td{x})&(\td{M_x})_{i,T_0}\\
				(\td{M_x})_{T_1,j}&(\td{M_x})_{T_1,T_0}
			\end{vmatrix}=\begin{cases}
				0&\tx{if}\;i\in T_1\\
				|(\td{M_x})_{T_1\cup\{i\},T_0\cup\{j\}}|&\tx{otherwise}
			\end{cases}.
		\end{equation*}
		We have $(I_{\tx{KS}})_{\Delta}\cap R_{\Delta}\subseteq (I_{\tx{minors}})_\Delta$. The converse is obvious, so $(I_{\tx{KS}})_{\Delta}\cap R_{\Delta}=(I_{\tx{minors}})_\Delta$.
	\end{proof}
	
	In general, the hypothesis that column indices of $\Delta$ is $\{n-r+1,\dots,n\}$ cannot be removed. We see the following example. Set $m=2,n=4,r=1$ and
	\begin{gather*}
		M_1=\begin{pmatrix}
			1&0&1&0\\0&0&0&0\\
		\end{pmatrix},\quad
		M_2=\begin{pmatrix}
			0&1&0&0\\
			0&0&0&0
		\end{pmatrix},\quad
		M_3=\begin{pmatrix}
			0&0&0&0\\
			1&0&0&1	\end{pmatrix},\\
		\td{M_x}=x_1M_1+x_2M_2+x_3M_3=\begin{pmatrix}
			x_1&x_2&x_1&0\\
			x_3&0&0&x_3
		\end{pmatrix}.
	\end{gather*}
	Then $I_{\tx{minors}}=\ideala{x_1x_3,x_2x_3}\subseteq \fq[x_1,x_2,x_3]$, $I_{\tx{KS}}=\ideala{x_1,x_2,x_3+x_3y_{1,1},x_3y_{1,2},x_3y_{1,3}}\subseteq\fq[x_1,x_2,x_3,$ $y_{1,1},y_{1,2},y_{1,3}]$. $(\td{M_x})_{2,4}=x_3,(\td{M_x})_{1,3}=x_1$. We see that
	\begin{gather*}
		(I_{\tx{KS}})_{x_3}\cap \fq[x_1,x_2,x_3]_{x_3}=(I_{\tx{minors}})_{x_3}=\ideala{x_1,x_2}_{x_3},\\
		\fq[x_1,x_2,x_3]_{x_1}=(I_{\tx{KS}})_{x_1}\cap \fq[x_1,x_2,x_3]_{x_1}\supsetneqq (I_{\tx{minors}})_{x_1}.
	\end{gather*}
	\subsection{Relations between $I_{\mr{supp}}$ and $I_{\mr{KS}}$}\,
	
	In the Support Minors Modeling, we consider the $r$-minors of the $r\times n$ matrix $B_{\td{y}}=(-\tx{Var}(\td{y}),I_r)\in\tx{Mat}_{r\times n}(R_{\tx{KS}})$ as new variables. Its $r$-minors satisfy certain relations. Especially:
	\begin{equation*}
		|(B_{\td{y}})_{*,T}|=\begin{cases}
			1&T=T_0=\{n-r+1,\dots,n\}\\
			(-1)^iy_{i,j}&T=T_{i,j}=T_0\cup\{j\}\setminus\{n-r+i\},1\leq i\leq r,1\leq j\leq n-r
		\end{cases}.
	\end{equation*}
	$\{C_T\}_{T\in\mc{P}_r(n)\setminus\{T_0\}}$ is a set of indeterminates. We retain the notation introduced at the beginning of this section. Set
	\begin{equation*}
		Y_T=\begin{cases}
			1&T=T_0\\
			C_T&T\neq T_0
		\end{cases}.
	\end{equation*}
	Here $Y_T$ corresponds to the $r$-minor $|(B_{\td{y}})_{*,T}|$.
	\begin{align*}
		I_{\tx{supp}}&=\ideala{g_{i,S}(\td{x},\td{Y})=\ssm{j\in S}(-1)^{\tx{sgn}(j,S)}u_{i,j}(\td{x})Y_{S\setminus\{j\}}:i\in[m],S\in\mc{P}_{r+1}(n)}\\
		&\subseteq R_{\tx{supp}}=R[C_T:T\in\mc{P}_r(n)\setminus\{T_0\}].
	\end{align*}
	If $(\td{a},(c_T)_{T\in\mc{P}_r(n)\setminus\{T_0\}})\in V_{\tx{supp}}=\mb{V}(I_{\tx{supp}})$, then $(\td{a},(c_T)_{T\in\mc{P}_r(n)})\in V^{\tx{Gb}}_{\tx{supp}}$, here $c_{T_0}=1$.
	For $j\in$ $[n-r]$, let $S_j=\{j,n-r+1,\dots,n\}\in\mc{P}_{r+1}(n)$. In some sense, if we think $Y_{T_{i,j}}$ as $(-1)^iy_{i,j}$, then the Support Minors polynomials induce the KS polynomials:
	\begin{equation*}
		g_{i,S_j}=u_{i,j}(\td{x})+\sum^r_{l=1}(-1)^lu_{i,n-r+l}(\td{x})Y_{T_{l,j}}\leadsto f_{i,j}=u_{i,j}(\td{x})+\sum^r_{l=1}u_{i,n-r+l}(\td{x})y_{l,j}.
	\end{equation*}
	The difficult part of discussing relations between $I_{\tx{supp}}$ and $I_{\tx{KS}}$ is that	$R_{\tx{supp}}$ and $R_{\tx{KS}}$ have different indeterminates. Bardet et. al \cite{GSM} considered the algebra homomorphism $\Psi:R_{\tx{supp}}\to R_{\tx{KS}}$ by sending $Y_T$ to $|B_{*,T}|$ and proved that $\Psi(I_{\tx{supp}})=I_{\tx{KS}}$. But they did not give the relations between $I_{\tx{supp}}$ and $\Psi(I_{\tx{supp}})$. Actually, $\{\Psi(Y_T)\}_{T\in\mc{P}_r(n)}$ satisfy the Grassmann-Plücker relations, i.e., for any $L\in\mc{P}_{r-1}(n)$ and $S\in\mc{P}_{r+1}(n)$, $\ssm{i\in S\setminus L}(-1)^{\tx{sgn}(i,S)+\tx{sgn}(i,L)}\Psi(Y_{T\cup\{i\}})\Psi(Y_{S\setminus\{i\}})=0$. It is the reason why we apply general considerations on the Grassmann-Plücker polynomials.
	
	For $L\in\mc{P}_{r-1}(n)$ and $S\in\mc{P}_{r+1}(n)$ ($\mc{P}_0(n)=\{\emptyset\}$), the Grassmann-Plücker polynomial with respect to $(L,S)$ is the quadric polynomial in $\fq[C_T:T\in\mc{P}_r(n)\setminus\{T_0\}]$:
	\begin{equation}\label{eq:GS}
		F_{L,S}:=\ssm{i\in S\setminus L}(-1)^{\tx{sgn}(i,S)+\tx{sgn}(i,L)}Y_{L\cup\{i\}}Y_{S\setminus\{i\}},
	\end{equation}
	Let $(c_T)_{T\in\mc{P}_r(n)\setminus T_0}\in\fq^{N-1}$ be a zero of all $F_{L,S}$, we will show that $c_T=(-\tx{Var}(\td{b}),I_r)_T$, where $\tx{Var}(\td{b})=(b_{i,j})_{i\in[r],j\in[n-r]}$ with $b_{i,j}=(-1)^ic_{T_{i,j}}$, from which the result will follow. Consider the matrix
	\begin{equation*}
		B_{\td{Y}}=\begin{pmatrix}
			(-1)^{0}Y_{T_{1,1}}&(-1)^{0}Y_{T_{1,2}}&\cdots&(-1)^{0}Y_{T_{1,n-r}}&1&0&\cdots&0\\
			(-1)^{1}Y_{T_{2,1}}&(-1)^{1}Y_{T_{2,2}}&\cdots&(-1)^{1}Y_{T_{2,n-r}}&0&1&\cdots&0\\
			\vdots&\vdots&\ddots&\vdots&\vdots&\vdots&\ddots&\vdots\\
			(-1)^{r-1}Y_{T_{r,1}}&(-1)^{r-1}Y_{T_{r,2}}&\cdots&(-1)^{r-1}Y_{T_{r,n-r}}&0&0&\cdots&1
		\end{pmatrix},
	\end{equation*}
	We see that $|(B_{\td{Y}})_{*,T_{i,j}}|=Y_{T_{i,j}}$. We claim that:
	
	\begin{lemma}
		The following two ideals in $\fq[C_T:T\in\mc{P}_r(n)\setminus\{T_0\}]$ are equal:
		\begin{equation*}
			I_{\tx{GP}}:=\ideala{F_{L,S}:L\in\mc{P}_{r-1}(n),S\in\mc{P}_{r+1}(n)}=\ideala{Y_T-|(B_{\td{Y}})_{*,T}|:T\in\mc{P}_r(n)}.
		\end{equation*}
		
	\end{lemma}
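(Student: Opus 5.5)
The plan is to prove the two inclusions separately. Write $J:=\ideala{Y_T-|(B_{\td{Y}})_{*,T}|:T\in\mc{P}_r(n)}$, and for $T\in\mc{P}_r(n)$ put $d(T):=\#(T\setminus T_0)$. The one external input is the classical fact that the maximal minors of any $r\times n$ matrix over a commutative ring satisfy the Grassmann--Plücker relations identically. That is, if $p_T=|A_{*,T}|$ for an $r\times n$ matrix $A$, then for all $L\in\mc{P}_{r-1}(n)$ and $S\in\mc{P}_{r+1}(n)$,
\begin{equation*}
\ssm{i\in S\setminus L}(-1)^{\tx{sgn}(i,S)+\tx{sgn}(i,L)}p_{L\cup\{i\}}p_{S\setminus\{i\}}=0 .
\end{equation*}
This follows by Laplace-expanding the $(2r)\times(2r)$ determinant built from the columns $L$ and $S$ of $\begin{pmatrix}A\\A\end{pmatrix}$, suitably arranged, which vanishes. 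We apply it to $A=B_{\td{Y}}$ over $\fq[C_T:T\neq T_0]$.

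\emph{Inclusion $I_{\tx{GP}}\subseteq J$.} Modulo $J$ we have $Y_T\equiv|(B_{\td{Y}})_{*,T}|$ for every $T$. Hence $F_{L,S}\equiv F_{L,S}\bigl(|(B_{\td{Y}})_{*,T}|\bigr)_T$, and the right-hand side is $0$ by the polynomial identity above. So every $F_{L,S}$ lies in $J$.

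\emph{Inclusion $J\subseteq I_{\tx{GP}}$.} We show $Y_T-|(B_{\td{Y}})_{*,T}|\in I_{\tx{GP}}$ by induction on $d(T)$. If $d(T)=0$, then $T=T_0$ and both terms equal $1$. If $d(T)=1$, then $T=T_{i,j}$, and $|(B_{\td{Y}})_{*,T_{i,j}}|=Y_{T_{i,j}}$ by the choice of signs in $B_{\td{Y}}$, so the difference is $0$.

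Now let $d(T)\geq2$. Choose $j\in T\setminus T_0$ and set $L=T\setminus\{j\}\in\mc{P}_{r-1}(n)$ and $S=T_0\cup\{j\}\in\mc{P}_{r+1}(n)$. Then $S\setminus L=\{j\}\cup(T_0\setminus T)$.
\begin{itemize}
\item The index $i=j$ contributes $\pm Y_T\,Y_{T_0}=\pm Y_T$.
\item Each $i\in T_0\setminus T$ contributes $\pm Y_{L\cup\{i\}}Y_{T_0\cup\{j\}\setminus\{i\}}$. Here $d(L\cup\{i\})=d(T)-1$, and $T_0\cup\{j\}\setminus\{i\}$ is of the form $T_{l,j}$, so it has $d=1$.
\end{itemize}
Thus $F_{L,S}=\epsilon Y_T+\sum_i\pm Y_{L\cup\{i\}}Y_{T_{l(i),j}}$ with $\epsilon=\pm1$ a unit.

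Apply the same relation to the true minors $p_T=|(B_{\td{Y}})_{*,T}|$, where $p_{T_0}=1$. This gives $0=\epsilon p_T+\sum_i\pm p_{L\cup\{i\}}p_{T_{l(i),j}}$, with exactly the same signs, since it is the same polynomial $F_{L,S}$ evaluated at the $p$'s. Subtract this from $F_{L,S}$. The induction hypothesis gives $Y_{L\cup\{i\}}\equiv p_{L\cup\{i\}}$ modulo $I_{\tx{GP}}$, and the base case gives $Y_{T_{l,j}}=p_{T_{l,j}}$. Together these yield $\epsilon(Y_T-p_T)\in I_{\tx{GP}}$, which completes the induction.

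The step needing the most care is the bookkeeping in this reduction. One must check that $S\setminus L$ is exactly $\{j\}\cup(T_0\setminus T)$, which uses $L\cap T_0=T\cap T_0$. One must also check that the coefficient of $Y_T$ is a unit. The Plücker identity for honest minors over a commutative ring is standard, and it is what guarantees that no separate sign verification is needed.
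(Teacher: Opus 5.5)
Your proof is correct and follows the same strategy as the paper. The inclusion $I_{\tx{GP}}\subseteq J$ comes from the Plücker identity for the true minors of $B_{\td{Y}}$, and the reverse inclusion is proved by induction on $\#(T\setminus T_0)$ using a single Plücker relation in which $Y_T$ is paired with $Y_{T_0}=1$. The only differences are that you take $L=T\setminus\{j\}$, $S=T_0\cup\{j\}$ (in effect an expansion along column $j$) instead of the paper's $L=T_0\setminus\{n-r+i\}$, $S=T\cup\{n-r+i\}$ (a Laplace expansion along row $i$), and that you get the matching identity for the true minors from the Plücker identity itself rather than by computing cofactor signs, which is a bit cleaner.
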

	\begin{proof}
		$Y_{T_0}-|(B_{\td{Y}})_{*,T_0}|=0$. We show that $Y_T-|(B_{\td{Y}})_{*,T}|\in I_{\tx{GP}}$ for $T\in\mc{P}_r(n)$ by induction on $\#(T\setminus\{T_0\})$. The case $\#(T\setminus T_0)=1$ is easy, as $T=T_{i,j}$ for some $i\in[r]$ and $j\in[n-r]$, we have $Y_{T_{i,j}}-|(B_{\td{Y}})_{*,T_{i,j}}|=0$. 
		Assume that $\#(T\setminus T_0)=l>1$. For $j\in T\setminus T_0$, $|(B_{\td{Y}})_{\{1,\dots,r\}\setminus\{i\},T\setminus\{j\}}|$, the $(r-1)$-minor of the element $(-1)^{i-1}Y_{T_{i,j}}$ in $(B_{\td{Y}})_{*,T}$, is exactly $(-1)^{i+\tx{sgn}(n-r+i,T)}|(B_{\td{Y}})_{*,T\cup\{n-r+i\}\setminus\{j\}}|$. The submatrix $(B_{\td{Y}})_{*,T\cup\{n-r+i\}}$ and the minor can be pictured as follows:
		\begin{figure}[H]
			\centering
			\includegraphics[width=0.4\linewidth]{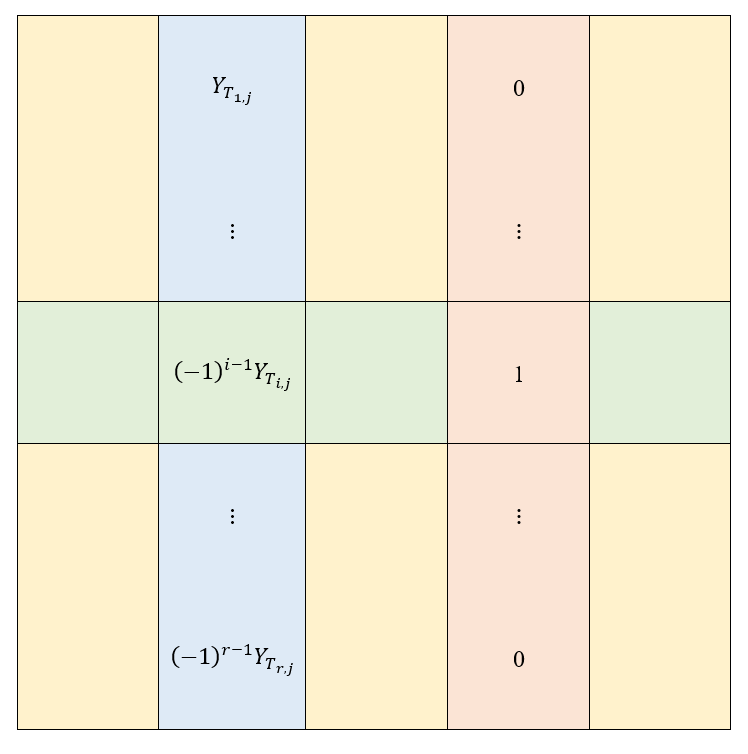}
			\caption{}
			\label{fig:pic}
		\end{figure}
		\noindent The yellow blocks consist of the $(r-1)$-minor of $(-1)^{i-1}Y_{T_{i,j}}$ in $(B_{\td{Y}})_{*,T}$. Expand $|(B_{\td{Y}})_{*,T}|$ according to the $i$-th row, we have
		\begin{equation*}
			|(B_{\td{Y}})_{*,T}|=\ssm{j\in T\setminus T_0}(-1)^{\tx{sgn}(j,T)+\tx{sgn}(n-r+i,T)+i}Y_{T_{i,j}}|(B_{\td{Y}})_{*,T\cup\{n-r+i\}\setminus\{j\}}|.
		\end{equation*}
		Since $j\in T$ and $n-r+i\notin T$, there is $\#((T\cup\{n-r+i\}\setminus\{j\})\setminus T_0)=l-1$. By the induction hypo- thesis, we take $S=T\cup\{n-r+i\}$ and $L=T_0\setminus\{n-r+i\}$, as $\tx{sgn}(n-r+i,L)=i-1$, $\tx{sgn}(j,L)=0$, $\tx{sgn}(n-r+i,S)=\tx{sgn}(n-r+i,T)$ and $\tx{sgn}(j,S)=\tx{sgn}(j,T)$, we obtain
		\begin{align*}
			&(-1)^{\tx{sgn}(n-r+i,L)+\tx{sgn}(n-r+i,S)}(Y_T-|(B_{\td{Y}})_{*,T}|)\\
			=&(-1)^{\tx{sgn}(n-r+i,L)+\tx{sgn}(n-r+i,S)}Y_T+\ssm{j\in T\setminus T_0}(-1)^{\tx{sgn}(j,S)}Y_{L\cup\{j\}}|(B_{\td{Y}})_{*,S\setminus\{j\}}|\\
			=&F_{L,S}-\ssm{j\in T\setminus T_0}(-1)^{\tx{sgn}(j,S)}Y_{L\cup\{j\}}\kh{Y_{S\setminus\{j\}}-|(B_{\td{Y}})_{*,S\setminus\{j\}}|}\in I_{\tx{GP}}.
		\end{align*}
		Hence $Y_T-|(B_{\td{Y}})_{*,T}|\in I_{\tx{GP}}$. So we are done by induction.
		
		Conversely, we can verify that for each $S\in\mc{P}_{r+1}(n)$ and $L\in\mc{P}_{r-1}(n)$,
		\begin{equation*}
			\ssm{i\in S\setminus L}(-1)^{\tx{sgn}(i,S)+\tx{sgn}(i,L)}|(B_{\td{Y}})_{*,L\cup\{i\}}|\cdot|(B_{\td{Y}})_{*,S\setminus\{i\}}|=0,
		\end{equation*}
		see \cite[Lemma 4.4]{Det}. Hence
		\begin{align*}
			F_{L,S}&=\ssm{i\in S\setminus L}(-1)^{\tx{sgn}(i,S)+\tx{sgn}(i,L)}\kh{Y_{L\cup\{i\}}-|(B_{\td{Y}})_{*,L\cup\{i\}}|+|(B_{\td{Y}})_{*,L\cup\{i\}}|}\cdot\kh{Y_{S\setminus\{i\}}-|(B_{\td{Y}})_{*,S\setminus\{i\}}|+|(B_{\td{Y}})_{*,S\setminus\{i\}}|}\\
			&\in\ideala{Y_T-|(B_{\td{Y}})_{*,T}|:T\in\mc{P}_r(n)}.
		\end{align*}
		This implies the assertion.
	\end{proof}
	
	\begin{proposition}
		$(\td{M_x})_{i,*}$ is the $i$-th row of $\td{M_x}$. We form the $(r+1)\times n$ matrix $B_{\td{Y},i}$ by appending to $B_{\td{Y}}$ the top row $(\td{M_x})_{i,*}$, i.e.,
		\begin{equation*}
			B_{\td{Y},i}=\begin{pmatrix}
				(\td{M_x})_{i,*}\\
				B_{\td{Y}}
			\end{pmatrix}.
		\end{equation*}
		Recall that $S_j=\{j,n-r+1,\dots,n\}$ ($j\leq n-r$). We see that
		\begin{equation*}
			|(B_{\td{Y},i})_{*,S_j}|=u_{i,j}(\td{x})+\sum^r_{l=1}(-1)^lu_{i,n-r+l}(\td{x})Y_{T_{l,j}}=g_{i,S_j}\in I_{\tx{supp}}.
		\end{equation*}
		Let $\tld{R}_{\tx{supp}}=R[C_{T_{i,j}}:i\in[r],j\in[n-r]]$. Then
		\begin{equation*}
			I_{\tx{supp}}\cap\tld{R}_{\tx{supp}}=\sum^m_{i=1}I_{r+1}(B_{\td{Y},i})=\ideala{g_{i,S_j}:i\in[m],j\in[n-r]}.
		\end{equation*}
	\end{proposition}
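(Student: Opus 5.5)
The plan is to build a retraction $\rho$ of $R_{\tx{supp}}$ onto $\tld{R}_{\tx{supp}}$ that sends every support-minors generator into $J:=\ideala{g_{i,S_j}:i\in[m],j\in[n-r]}$. Alongside this, I will show directly that $\sum_{i=1}^m I_{r+1}(B_{\td{Y},i})=J$.

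\emph{Step 1: the minors of $B_{\td{Y},i}$.} All entries of $B_{\td{Y},i}$ lie in $\tld{R}_{\tx{supp}}$. For $l\in[r]$, subtract $u_{i,n-r+l}(\td{x})$ times the $(l+1)$-th row (row $l$ of $B_{\td{Y}}$) from the first row. These elementary row operations do not change any maximal minor.
\begin{itemize}
\item In columns $n-r+1,\dots,n$ the first row becomes $0$, because the last $r$ columns of $B_{\td{Y}}$ form $I_r$.
\item In column $j\le n-r$ the new entry is $u_{i,j}(\td{x})-\sum_l(-1)^{l-1}u_{i,n-r+l}(\td{x})Y_{T_{l,j}}=g_{i,S_j}$.
\end{itemize}
Expanding any $(r+1)$-minor of the transformed matrix along its first row then shows that it lies in $\ideala{g_{i,S_j}:j\in[n-r]}$. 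Conversely, the statement already records that $|(B_{\td{Y},i})_{*,S_j}|=g_{i,S_j}$. Summing over $i$ gives $\sum_iI_{r+1}(B_{\td{Y},i})=J$. Since each $g_{i,S_j}$ lies in $I_{\tx{supp}}$ and in $\tld{R}_{\tx{supp}}$, we also get $J\subseteq I_{\tx{supp}}\cap\tld{R}_{\tx{supp}}$.

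\emph{Step 2: the retraction.} Define the $R$-algebra homomorphism $\rho:R_{\tx{supp}}\to\tld{R}_{\tx{supp}}$ by $\rho(C_T)=|(B_{\td{Y}})_{*,T}|$ for $T\ne T_0$. Because $R_{\tx{supp}}$ is a polynomial ring, $\rho$ is well defined.
\begin{itemize}
\item Since $|(B_{\td{Y}})_{*,T_{i,j}}|=Y_{T_{i,j}}$, $\rho$ fixes each $C_{T_{i,j}}$, so $\rho$ is the identity on $\tld{R}_{\tx{supp}}$.
\item Since $|(B_{\td{Y}})_{*,T_0}|=1=Y_{T_0}$, we have $\rho(Y_T)=|(B_{\td{Y}})_{*,T}|$ for every $T\in\mc{P}_r(n)$.
\end{itemize}
Then
\[
\rho(g_{i,S})=\ssm{j\in S}(-1)^{\tx{sgn}(j,S)}u_{i,j}(\td{x})\,|(B_{\td{Y}})_{*,S\setminus\{j\}}|.
\]
This is exactly the Laplace expansion of $|(B_{\td{Y},i})_{*,S}|$ along its first row: the column $j$ sits in position $\tx{sgn}(j,S)+1$, which gives the sign $(-1)^{\tx{sgn}(j,S)}$. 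Hence $\rho(g_{i,S})\in I_{r+1}(B_{\td{Y},i})\subseteq J$ by Step 1.

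\emph{Step 3: conclusion.} Let $f\in I_{\tx{supp}}\cap\tld{R}_{\tx{supp}}$ and write $f=\sum a_{i,S}g_{i,S}$ with $a_{i,S}\in R_{\tx{supp}}$. Applying $\rho$ gives $f=\rho(f)=\sum\rho(a_{i,S})\rho(g_{i,S})\in J$. Together with Step 1 this proves both equalities.

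I expect the main point of care to be the sign bookkeeping. The two places to check are that the row reduction in Step 1 produces exactly $g_{i,S_j}$, given the $(-1)^{l-1}$ signs in $B_{\td{Y}}$, and the Laplace sign identification in Step 2. The remaining steps are formal.
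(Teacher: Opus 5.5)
Your proof is correct and is essentially the paper's argument. Your retraction $\rho$ is exactly the substitution of Claim~\ref{clm:I cap R}, your row reduction is the transposed form of the paper's column relation plus Claim~\ref{clm:det}, and the Laplace identification $\rho(g_{i,S})=|(B_{\td{Y},i})_{*,S}|$ is the same step; the only difference is that you apply $\rho$ to $I_{\tx{supp}}$ directly, whereas the paper first passes to $I_{\tx{supp}}+I_{\tx{GP}}$ (using the Grassmann--Pl\"ucker lemma so that Claim~\ref{clm:I cap R} applies) and then sandwiches, so your version shows that lemma is not needed here. (The identity $|(B_{\td{Y},i})_{*,S_j}|=g_{i,S_j}$ is part of the statement rather than a given, but it follows at once from your reduced matrix, which is block triangular with diagonal blocks $g_{i,S_j}$ and $I_r$.)
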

	
	\begin{proof}
		The inclusion $\ideala{g_{1,S_1},\dots,g_{m,S_{n-r}}}\subseteq I_{\tx{supp}}\cap \tld{R}_{\tx{supp}}$ is obvious. For $j\in[n-r]$, there is 
		\begin{equation*}
			(B_{\td{Y},i})_{*,j}=g_{i,S_j}\td{e}_0-\sum^r_{l=1}(-1)^lY_{T_{l,j}}(B_{\td{Y},i})_{*,n-r+l},\quad\td{e}_0=\begin{pmatrix}
				1\\0\\\vdots\\0
			\end{pmatrix}\in\fq^{r+1}.
		\end{equation*}
		By Claim \ref{clm:det}, $I_{r+1}(B_{\td{Y},i})=\ideala{g_{i,S_j}:j\in[n-r]}$. By a slight abuse of notation, we still denote by $I_{\tx{GP}}$ the ideal of $R_{\tx{supp}}$ generated by all the Grassmann–Plücker polynomials $F_{L,S}$ \eqref{eq:GS}. Using Claim \ref{clm:I cap R}, we obtain
		\begin{align*}
			(I_{\tx{supp}}+I_{\tx{GP}})\cap \tld{R}_{\tx{supp}}&=(I_{\tx{supp}}+\ideala{Y_T-|(B_{\td{Y}})_{*,T}|:T\in\mc{P}_r(n)})\cap\tld{R}_{\tx{supp}}\\
			&=\ideala{\ssm{j\in S}(-1)^{\tx{sgn}(j,S)}u_{i,j}(\td{x})|(B_{\td{Y}})_{*,S\setminus\{j\}}|:i\in[m],S\in\mc{P}_{r+1}(n)}\\
			&=\ideala{|(B_{\td{Y},i})_{*,S}|:i\in[m],S\in\mc{P}_{r+1}(n)}=\sum^m_{i=1}I_{r+1}(B_{\td{Y},i}).
		\end{align*}
		This gives that $(I_{\tx{supp}}+I_{\tx{GP}})\cap\tld{R}_{\tx{supp}}=I_{\tx{supp}}\cap\tld{R}_{\tx{supp}}=\sum^m_{i=1}I_{r+1}(B_{\td{Y},i})$.
	\end{proof}
	
	\begin{corollary}
		We construct an $\fq$-algebra isomorphism
		\begin{align*}
			\Phi:\tld{R}_{\tx{supp}}\to R_{\tx{KS}}=\fq[x_1,\dots,x_k,y_{1,1},\dots,y_{r,n-r}].
		\end{align*}
		by sending $x_i$ to $x_i$ and $C_{T_{j,l}}$ to $(-1)^jy_{j,l}$. Then
		\begin{equation*}
			\Phi(I_{\tx{supp}}\cap\tld{R}_{\tx{supp}})=\ideala{\Phi(g_{i,S_j}):i\in[m],j\in[n-r]}=I_{\tx{KS}}.
		\end{equation*}
	\end{corollary}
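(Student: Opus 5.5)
The corollary follows almost immediately from the preceding proposition, so the plan is to transport that result through $\Phi$. First I will check that $\Phi$ is a well-defined $\fq$-algebra isomorphism. The indeterminates of $\tld{R}_{\tx{supp}}$ are $x_1,\dots,x_k$ together with the $C_{T_{j,l}}$ for $j\in[r]$, $l\in[n-r]$. These sets $T_{j,l}$ are pairwise distinct, because $T_{j,l}$ determines $l$ as its unique element outside $T_0$ and determines $j$ through the missing element $n-r+j$. Hence $\Phi$ sends a set of algebraically independent generators bijectively onto $\{x_i\}\cup\{\pm y_{j,l}\}$, and its inverse sends $y_{j,l}\mapsto(-1)^jC_{T_{j,l}}$.

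Next, since $\Phi$ is a ring isomorphism, it maps any ideal generated by a set $G$ onto the ideal generated by $\Phi(G)$. By the preceding proposition, $I_{\tx{supp}}\cap\tld{R}_{\tx{supp}}=\ideala{g_{i,S_j}:i\in[m],j\in[n-r]}$, which gives the first equality
\[
\Phi(I_{\tx{supp}}\cap\tld{R}_{\tx{supp}})=\ideala{\Phi(g_{i,S_j}):i\in[m],j\in[n-r]}.
\]

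For the second equality I will compute directly. We have $g_{i,S_j}=u_{i,j}(\td{x})+\sum_{l=1}^r(-1)^lu_{i,n-r+l}(\td{x})Y_{T_{l,j}}$, where $Y_{T_{l,j}}=C_{T_{l,j}}$ because $T_{l,j}\neq T_0$. Applying $\Phi$ replaces each $C_{T_{l,j}}$ by $(-1)^ly_{l,j}$, and $(-1)^l(-1)^l=1$, so
\[
\Phi(g_{i,S_j})=u_{i,j}(\td{x})+\sum_{l=1}^r u_{i,n-r+l}(\td{x})y_{l,j}=f_{i,j}.
\]
This is exactly the generator in \eqref{eq:KS poly}, and therefore the ideal equals $I_{\tx{KS}}$.

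I do not expect a real obstacle here; the only delicate point is the sign bookkeeping. The expansion of $g_{i,S_j}$ uses $\tx{sgn}(n-r+l,S_j)=l$ and $\tx{sgn}(j,S_j)=0$, and I will double-check it against the expression for $|(B_{\td{Y},i})_{*,S_j}|$ recorded in the proposition. One further observation: the index range "$i\in[m]$" in Theorem \ref{thm:main thm}(2) should read $i\in[r]$, matching the definition of $\tld{R}_{\tx{supp}}$ here. With that reading, part (2) of the main theorem follows.
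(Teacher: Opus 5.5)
Your proposal is correct and takes the same route the paper does implicitly: the paper states the corollary without a separate proof, obtaining it by pushing the preceding proposition's identity $I_{\tx{supp}}\cap\tld{R}_{\tx{supp}}=\ideala{g_{i,S_j}:i\in[m],j\in[n-r]}$ through $\Phi$ and observing $\Phi(g_{i,S_j})=f_{i,j}$, with exactly the sign bookkeeping you carry out. You are also right that the index range $i\in[m]$ in part (2) of the main theorem should read $i\in[r]$.
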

	
	\subsection{Relations between $I^{\mr{Gb}}_{\mr{supp}}$ and $I_{\mr{minors}}$}\,
	
	We write $\pi$ for the projection of $\fq^k\times\fq^N\to\fq^k$.
	\begin{proposition}\label{pro:Vsup and Vmin}
		$\pi(V^{\tx{Gb}}_{\tx{supp}})=V_{\tx{minors}}$.
	\end{proposition}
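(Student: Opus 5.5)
We prove the two inclusions separately, working pointwise over $\fq$: fix $\td{a}\in\fq^k$ and regard the support-minors equations $h_{i,S}(\td{a},\td{C})$ as linear equations in the unknowns $C_T$.

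\emph{Inclusion $\pi(V^{\tx{Gb}}_{\tx{supp}})\supseteq V_{\tx{minors}}$.} Let $\td{a}\in V_{\tx{minors}}$, so $s:=\tx{rank}\,\td{M_a}\leq r$. The row space of $\td{M_a}$ has dimension $s\le r<n$, so we can extend a basis of it to $r$ linearly independent vectors in $\fq^n$. These are the rows of an $r\times n$ matrix $B$ of rank $r$ whose row space contains every row $(\td{M_a})_{i,*}$. Put $c_T=|B_{*,T}|$ for $T\in\mc{P}_r(n)$. Since $\tx{rank}\,B=r$, some maximal minor of $B$ is nonzero, so $\td{c}\neq\td{0}_N$.

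For each $i$, the $(r+1)\times n$ matrix $\begin{pmatrix}(\td{M_a})_{i,*}\\ B\end{pmatrix}$ has rank $r$. Hence all its maximal minors vanish. Expanding such a minor along the first row gives exactly $h_{i,S}(\td{a},\td{c})=0$. Therefore $(\td{a},\td{c})\in V^{\tx{Gb}}_{\tx{supp}}$.

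\emph{Inclusion $\pi(V^{\tx{Gb}}_{\tx{supp}})\subseteq V_{\tx{minors}}$.} This is the main obstacle, because $\td{c}$ is only a nonzero vector and need not satisfy the Grassmann--Plücker relations, so it need not come from any matrix $B$. We argue by contradiction. Suppose $(\td{a},\td{c})\in V^{\tx{Gb}}_{\tx{supp}}$ but $\tx{rank}\,\td{M_a}\ge r+1$. Choose $r+1$ linearly independent rows $\td{w}_0,\dots,\td{w}_r$ of $\td{M_a}$. By linearity in the row, the equations $h_{i,S}(\td{a},\td{c})=0$ say that the linear map
\begin{equation*}
\phi_{\td{c}}:\fq^n\to\fq^{\mc{P}_{r+1}(n)},\qquad \td{w}\mapsto\Bigl(\ssm{j\in S}(-1)^{\tx{sgn}(j,S)}w_jc_{S\setminus\{j\}}\Bigr)_S
\end{equation*}
vanishes on each $\td{w}_l$, and hence on their span $W$, which has dimension $r+1$.

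In exterior-algebra language, $\td{c}$ is an element $\omega\in\Lambda^r\fq^n$, and $\phi_{\td{c}}(\td{w})$ is (up to signs) $\td{w}\wedge\omega\in\Lambda^{r+1}\fq^n$. We need the standard fact: if $\td{w}\wedge\omega=0$ for all $\td{w}$ in a subspace $W$ of dimension $r+1$, then $\omega=0$. To see this, change basis so that $W=\langle e_1,\dots,e_{r+1}\rangle$; this is legitimate because $GL_n$ acts compatibly on $\Lambda^r$ and $\Lambda^{r+1}$. From $e_1\wedge\omega=0$ we get $\omega=e_1\wedge\eta_1$. Repeating with $e_2,\dots,e_{r+1}$, we find that $\omega$ is divisible by $e_1\wedge\cdots\wedge e_{r+1}$, a form of degree $r+1>r$. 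Hence $\omega=0$. Concretely, for any $T$, the coordinate $c_T$ appears in the equation for $S=T\cup\{j\}$ with $j\in\{1,\dots,r+1\}\setminus T$, a nonempty set, and that equation forces $c_T=0$.

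This gives $\td{c}=\td{0}_N$, contradicting $(\td{a},\td{c})\in V^{\tx{Gb}}_{\tx{supp}}$. Therefore $\tx{rank}\,\td{M_a}\le r$, that is, $\td{a}\in V_{\tx{minors}}$.

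The step that needs care is checking that the change of basis in the second inclusion transforms the equations $h_{i,S}$ equivariantly. Identifying $\phi_{\td{c}}(\td{w})$ with $\td{w}\wedge\omega$, including the signs $(-1)^{\tx{sgn}(j,S)}$, settles this.
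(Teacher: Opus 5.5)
Your proof is correct. The first inclusion is the paper's construction: you complete the row space to $r$ independent rows, take $c_T$ to be the maximal minors, and get $h_{i,S}$ by first-row expansion.

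The second inclusion takes a different route. The paper argues directly and column-wise. It picks $T_1$ with $c_{T_1}\neq 0$ and observes that, for each $j\notin T_1$, the $m$ equations $h_{i,T_1\cup\{j\}}(\td{a},\td{c})=0$ express the $j$-th column of $\td{M_a}$ as a linear combination of the columns indexed by $T_1$, with $\pm c_{T_1}$ as the coefficient of column $j$. Hence $r$ columns span the column space.

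You argue row-wise and by contradiction. You read $h_{i,S}(\td{a},\td{c})$ as the $e_S$-coordinate of $(\td{M_a})_{i,*}\wedge\omega$. This identification is exact, not just up to sign, because $e_j\wedge e_{S\setminus\{j\}}=(-1)^{\mathrm{sgn}(j,S)}e_S$. You then use that a nonzero $\omega\in\Lambda^r\fq^n$ cannot be annihilated by wedging with an $(r+1)$-dimensional subspace.

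The paper's route is shorter and needs no change of basis. It also gives more: any $T$ with $c_T\neq 0$ indexes a set of columns spanning the column space, which is the natural counterpart of the paper's next proposition ($c_{T_1}=0\Rightarrow \mathrm{rank}\,(\td{M_a})_{*,T_1}<r$).

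Your route is coordinate-free and explains the statement structurally. At a fixed $\td{a}$, the support-minors equations say exactly that the row space of $\td{M_a}$ lies in $\{\td{w}:\td{w}\wedge\omega=0\}$. For $\omega\neq 0$ this space has dimension at most $r$, with equality precisely when $\omega$ is decomposable. That makes transparent why the proposition holds even though $\td{c}$ need not satisfy the Grassmann--Pl\"ucker relations, which you correctly flagged as the main obstacle.
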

	
	\begin{proof}
		if $\td{a}\in V_{\tx{minors}}$, then $\tx{rank}\,\td{M_a}=r_0\leq r$. Choose linearly independent row vectors $\{\td{w}_i\}_{i=1}^{r_0}\in\fq^n$ such that $\{\td{w}_i\}_{i=1}^{r_0}$ span the row space of $\td{M_a}$. Extend $\{\td{w}_i\}_{i=1}^{r_0}$ to a basis $\{\td{w}_i\}_{i=1}^n$ of $\fq^n$. We write $\{\td{w}_i\}_{i=1}^r$ as the rows of the $r\times n$ matrix:
		\begin{equation*}
			W=\begin{pmatrix}
				\td{w}_1\\
				\vdots\\
				\td{w}_r
			\end{pmatrix}.
		\end{equation*}
		For each $i$, we append the $i$-th row of $\td{M_a}$ to get the $(r+1)\times n$ matrix of rank $r$: 
		\begin{equation*}
			W_i=\begin{pmatrix}
				(\td{M_a})_{i,*}\\
				W
			\end{pmatrix}.
		\end{equation*}
		Set $c_T=|W_{*,T}|$ for $T\in\mc{P}_r(n)$. $\tx{rank}\,W=r$, hence $\td{c}=(c_T)_{T\in\mc{P}_r(n)}\neq\td{0}_N$. $\ssm{j\in S}(-1)^{\tx{sgn}(j,S)}u_{i,j}(\td{a})c_{S\setminus\{j\}}$ is the $(r+1)$-minor $|(W_i)_{*,S}|$. But every $W_i$ has rank $r$, so $(\td{a},\td{c})\in V^{\tx{Gb}}_{\tx{supp}}$. $V_{\tx{minors}}\subseteq\pi(V^{\tx{Gb}}_{\tx{supp}})$.
		
		Conversely, fix $(\td{a},\td{c})\in V^{\tx{Gb}}_{\tx{supp}}$. If $c_{T_1}\neq 0$ for some $T_1\in\mc{P}_r(n)$, then columns of $\td{M_a}$ indexed by $T_1$ span the column space of $\td{M_a}$. Indeed, for any $j\notin T$, $h_{i,T_1\cup\{j\}}(\td{a},\td{c})=0$ ($i\in[m]$) implies that
		\begin{equation*}
			(\td{M_a})_{*,j}=(-1)^{\tx{sgn}(j,T_1)}c_{T_1}^{-1}\ssm{l\in T_1}(-1)^{\tx{sgn}(l,T_1\cup\{j\})}c_{T_1\cup\{j\}\setminus\{l\}}(\td{M_a})_{*,l},
		\end{equation*}
		here $(\td{M_a})_{*,j}$ is the $j$-th column of $\td{M_a}$. Thus $\tx{rank}\,\td{M_a}\leq r$. $\pi(V^{\tx{Gb}}_{\tx{supp}})=V_{\tx{minors}}$. 
	\end{proof}

	If $\td{c}=(c_T)_{T\in\mc{P}_r(n)}$ comes from an $r\times n$ matrix $W_0$, i.e., $c_T=$ $|(W_0)_{*,T}|$ for all $T\in\mc{P}_r(n)$. $h_{i,S}(\td{a},\td{c})=0$ for all $S\in\mc{P}_{r+1}(n)$ means that
	\begin{equation*}
		\tx{rank}\,\begin{pmatrix}
			(\td{M_a})_{i,*}\\
			W_0
		\end{pmatrix}=r
	\end{equation*}
	$\td{c}$ is non-zero, hence $\tx{rank}\,W_0=r$. The $i$-th row of $\td{M_a}$ can be linearly expressed as a linear combination of rows of $W_0$. One can verify that if $c_{T_1}=0$ for $T_1\in\mc{P}_r(n)$, then $\tx{rank}\,(\td{M_a})_{*,T_1}<r$. This property remains true for the general case.
	
	\begin{proposition}
		For any $(\td{a},\td{c})\in V^{\tx{Gb}}_{\tx{supp}}$, if $c_{T_1}=0$ for some $T_1\in\mc{P}_r(n)$, then $\tx{rank}\,(\td{M_a})_{*,T_1}<r$.
	\end{proposition}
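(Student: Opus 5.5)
The plan is to argue by contradiction, reinterpreting the support-minors equations in the exterior algebra. Suppose $(\td{a},\td{c})\in V^{\tx{Gb}}_{\tx{supp}}$, $c_{T_1}=0$, and $\tx{rank}\,(\td{M_a})_{*,T_1}=r$. By Proposition \ref{pro:Vsup and Vmin}, $\tx{rank}\,\td{M_a}\leq r$. Hence $\tx{rank}\,\td{M_a}=r$, and the $r$ columns indexed by $T_1$ form a basis of the column space. Choose a row index set $R_1\in\mc{P}_r(m)$ with $|(\td{M_a})_{R_1,T_1}|\neq 0$. Then the rows $\td{w}_1,\dots,\td{w}_r$ of $\td{M_a}$ indexed by $R_1$ are linearly independent in $\fq^n$.

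\textbf{Step 1: wedge reformulation.} Let $\td{e}_1,\dots,\td{e}_n$ be the standard basis of $\fq^n$. For $T\in\mc{P}_r(n)$ with $T=\{t_1<\dots<t_r\}$, set $\td{e}_T=\td{e}_{t_1}\wedge\cdots\wedge\td{e}_{t_r}$, and put $\omega=\sum_{T}c_T\td{e}_T\in\bigwedge^r\fq^n$. Since $\td{c}\neq\td{0}_N$, we have $\omega\neq 0$. For a row vector $\td{u}=\sum_j u_j\td{e}_j$, the identity $\td{e}_j\wedge\td{e}_{S\setminus\{j\}}=(-1)^{\tx{sgn}(j,S)}\td{e}_S$ shows that the $\td{e}_S$-coordinate of $\td{u}\wedge\omega$ is $\sum_{j\in S}(-1)^{\tx{sgn}(j,S)}u_jc_{S\setminus\{j\}}$. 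Taking $\td{u}$ to be a row of $\td{M_a}$, this is exactly $h_{i,S}(\td{a},\td{c})$. The equations $h_{i,S}(\td{a},\td{c})=0$ are therefore equivalent to $(\td{M_a})_{i,*}\wedge\omega=0$ for every $i\in[m]$. In particular, $\td{w}_l\wedge\omega=0$ for $l=1,\dots,r$.

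\textbf{Step 2: $\omega$ is decomposable.} This is the standard lemma that if linearly independent vectors $\td{w}_1,\dots,\td{w}_r$ satisfy $\td{w}_l\wedge\omega=0$ for $\omega\in\bigwedge^r$, then $\omega=\lambda\,\td{w}_1\wedge\cdots\wedge\td{w}_r$. To prove it, extend the $\td{w}_l$ to a basis $\td{w}_1,\dots,\td{w}_n$ of $\fq^n$ and expand $\omega=\sum_T d_T\td{w}_T$ in the induced basis of $\bigwedge^r\fq^n$. For fixed $l$, we have $\td{w}_l\wedge\td{w}_T=0$ when $l\in T$. When $l\notin T$, the products $\td{w}_l\wedge\td{w}_T$ are, up to sign, distinct basis elements $\td{w}_{T\cup\{l\}}$. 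Thus $\td{w}_l\wedge\omega=0$ forces $d_T=0$ whenever $l\notin T$. Applying this for every $l\in[r]$ leaves only $T=[r]$, so $\omega=\lambda\,\td{w}_1\wedge\cdots\wedge\td{w}_r$. Since $\omega\neq0$, we get $\lambda\neq 0$.

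\textbf{Step 3: conclusion.} The $\td{e}_{T_1}$-coordinate of $\td{w}_1\wedge\cdots\wedge\td{w}_r$ is the maximal minor $|(\td{M_a})_{R_1,T_1}|$. Hence
\begin{equation*}
c_{T_1}=\lambda\,|(\td{M_a})_{R_1,T_1}|\neq 0,
\end{equation*}
contradicting $c_{T_1}=0$. This proves the proposition. It also recovers the special case noted before it, where $\td{c}$ comes from a matrix $W_0$, since Step 2 shows every solution is of that form whenever $\tx{rank}\,\td{M_a}=r$.

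The main obstacle is the sign bookkeeping in Step 1, namely checking that $h_{i,S}$ is precisely the $S$-coordinate of the wedge product with the paper's convention $\tx{sgn}(j,S)=\#\{i\in S:i<j\}$. The other ingredients are basis-independent facts about exterior algebra, and changing from the standard basis to the $\td{w}$-basis is harmless because $\wedge$ is defined intrinsically. If one wants to avoid exterior algebra, Step 2 can be done by hand instead. Using Claim \ref{clm:I cap R}-style substitutions, express all other rows as combinations of the $\td{w}_l$ and solve the linear equations $h_{i,S}=0$ for the $c_T$ in terms of a single $c_{T'}$. This is essentially the computation in the proof of Proposition \ref{pro:Vsup and Vmin} run in reverse, but it is messier than the wedge argument.
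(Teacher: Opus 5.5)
Your proof is correct, but it takes a different route from the paper. You work with the rows of $\td{M_a}$ through the exterior algebra, whereas the paper stays entirely on the column side.

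The paper assumes the columns $(\td{M_a})_{*,l}$, $l\in T_1$, are linearly independent. It reads the equations $h_{i,S}(\td{a},\td{c})=0$, $i\in[m]$, as the column relation $\sum_{l\in S}(-1)^{\tx{sgn}(l,S)}c_{S\setminus\{l\}}(\td{M_a})_{*,l}=0$. It then propagates $c_{T_1}=0$ to every $c_T$ by induction on $\#(T\setminus T_1)$, contradicting $\td{c}\neq\td{0}_N$. As printed, the inductive step takes $S=T_1\cup\{j\}$. It should be $S=T\cup\{l_0\}$ with $l_0\in T_1\setminus T$: then the terms with $l\in S\setminus T_1$ vanish by induction, and independence of the $T_1$-columns forces $c_{S\setminus\{l_0\}}=c_T=0$. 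That argument is elementary and needs no upper bound on $\tx{rank}\,\td{M_a}$. It uses nothing beyond the column form of the equations already exploited in Proposition \ref{pro:Vsup and Vmin}.

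Your approach recasts the equations $h_{i,S}(\td{a},\td{c})=0$, $S\in\mc{P}_{r+1}(n)$, as $(\td{M_a})_{i,*}\wedge\omega=0$ and combines this with the decomposability lemma. It costs a little exterior algebra but proves more. When $\tx{rank}\,\td{M_a}=r$, every admissible $\td{c}$ is a nonzero multiple of the Pl\"ucker vector of the row space, namely $c_T=\lambda\,|(\td{M_a})_{R_1,T}|$ for all $T$. Hence $c_T\neq 0$ exactly when $\tx{rank}\,(\td{M_a})_{*,T}=r$. This makes precise the paper's remark that the case where $\td{c}$ comes from a matrix $W_0$ is really the general case, and it avoids the index bookkeeping entirely.

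Two small points. Your appeal to Proposition \ref{pro:Vsup and Vmin} is unnecessary for the proposition itself: Steps 2 and 3 only use that the rows indexed by $R_1$ are linearly independent, which already follows from $\tx{rank}\,(\td{M_a})_{*,T_1}=r$. The by-hand fallback you sketch at the end is also not needed.
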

	
	
	\begin{proof}
		Suppose that $\tx{rank}\,(\td{M_a})_{*,T_1}=r$, then the columns $\{(\td{M_a})_{*,j}\}_{j\in T_1}$ are linearly independent. We state that $c_T=0$ for all $T$. This contradicts $\td{c}\neq\td{0}_N$. So $\tx{rank}\,(\td{M_a})_{*,T_1}<r$. We prove the statement by induction on $\#(T\setminus T_1)$, starting with $\#(T\setminus T_1)=0$, where the statement follows directly from the condition. Suppose that $c_T=0$ for any $T\in\mc{P}_r(n)$ such that $\#(T\setminus T_1)=k_0$. For $T\in\mc{P}_r(n)$ with $\#(T\setminus T_1)=k_0+1$, choose $j\in T\setminus T_1$, set $S=T_1\cup\{j\}$, we have
		\begin{gather*}
			\ssm{l\in S\cap T_1}(-1)^{\tx{sgn}(l,S)}u_{i,l}(\td{a})c_{S\setminus\{l\}}=-\ssm{l\in S\setminus T_1}(-1)^{\tx{sgn}(l,S)}u_{i,l}(\td{a})c_{S\setminus\{l\}},\quad  1\leq i\leq m.\\
			\ssm{l\in S\cap T_1}(-1)^{\tx{sgn}(l,S)}c_{S\setminus\{l\}}(\td{M_a})_{*,l}=-\ssm{l\in S\setminus T_1}(-1)^{\tx{sgn}(l,S)}c_{S\setminus\{l\}}(\td{M_a})_{*,l}.
		\end{gather*}
		When $l\in S\setminus T_1,\#((S\setminus\{l\})\setminus T_1)=k_0$, by induction these $c_{S\setminus\{l\}}=0$. The column vectors $\{(\td{M_a})_{*,l}\}_{l\in T_1}$ are linearly independent, so $c_{S\setminus\{l\}}=0$ for $l\in S\cap T_1$. $c_{S\setminus\{j\}}=c_T=0$. This completes the proof.
	\end{proof}

	\begin{proposition}
		We have the following conclusions:
		\begin{enumerate}
			\item For any subset $T\in\mc{P}_r(n)$, $C_T^{r+1}I_{\tx{minors}}\subseteq I^{\tx{Gb}}_{\tx{supp}}$.
			\item Let $\mf{m}=\ideala{C_T:T\in\mc{P}_r(n)}\subseteq R^{\tx{Gb}}_{\tx{supp}}$, then
			\begin{equation*}
				\mb{I}(V_{\tx{minors}})=I_{\tx{minors}}+\ideala{x_i^q-x_i:i\in [k]}=\kh{\mb{I}(V^{\tx{Gb}}_{\tx{supp}}):\mf{m}}\cap R,
			\end{equation*}
			here $\mb{I}(V^{\tx{Gb}}_{\tx{supp}})=I^{\tx{Gb}}_{\tx{supp}}+\ideala{x_i^q-x_i,C_T^q-C_T:i\in[k],T\in\mc{P}_r(n)}$.
		\end{enumerate}
	\end{proposition}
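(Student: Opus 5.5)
We prove (1) by a column-replacement argument based on Claim \ref{clm:det}, and (2) by combining (1) with the $\fq$-Nullstellensatz and Proposition \ref{pro:Vsup and Vmin}.

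\textbf{Step 1: a column relation.} Fix $T\in\mc{P}_r(n)$ and write $I=I^{\tx{Gb}}_{\tx{supp}}$. For $j\notin T$, put $S=T\cup\{j\}$. The generators $h_{i,S}$, for $i\in[m]$, are exactly the entries of the column vector
\begin{equation*}
\ssm{l\in S}(-1)^{\tx{sgn}(l,S)}C_{S\setminus\{l\}}(\td{M_x})_{*,l}.
\end{equation*}
The $l=j$ term has coefficient $\pm C_T$. So, modulo $I$, the vector $C_T(\td{M_x})_{*,j}$ is an $R^{\tx{Gb}}_{\tx{supp}}$-linear combination of the $r$ columns $(\td{M_x})_{*,l}$ with $l\in T$. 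This is the same manipulation as in the proof of Proposition \ref{pro:Vsup and Vmin}, but done in the polynomial ring. For $j\in T$ the vector $C_T(\td{M_x})_{*,j}$ is trivially such a combination. Hence for every column index $j$ we can write
\begin{equation*}
C_T(\td{M_x})_{*,j}=\td{v}_j+\td{c}_j,
\end{equation*}
where $\td{v}_j$ lies in the span of $\{(\td{M_x})_{*,l}\}_{l\in T}$ and the entries of $\td{c}_j$ lie in $I$.

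\textbf{Step 2: part (1).} Let $\Delta=|(\td{M_x})_{T_1,S}|$ be an $(r+1)$-minor, with $T_1\in\mc{P}_{r+1}(m)$ and $S\in\mc{P}_{r+1}(n)$. Multiplying each of its $r+1$ columns by $C_T$ gives $C_T^{r+1}\Delta$, the determinant of the matrix with columns $(\td{v}_j+\td{c}_j)_{T_1}$ for $j\in S$. The restricted vectors $(\td{v}_j)_{T_1}$ are combinations of only $r$ vectors $((\td{M_x})_{T_1,l})_{l\in T}$. By the first part of Claim \ref{clm:det}, the matrix with columns $(\td{v}_j)_{T_1}$ has determinant $0$. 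By the second part of Claim \ref{clm:det}, perturbing the columns by the $\td{c}_j$ moves the determinant only into the ideal generated by their entries, which is contained in $I$. Therefore $C_T^{r+1}\Delta\in I$. The main care needed is in the sign bookkeeping of Step 1, but signs do not affect the conclusion, because only membership in the span is used.

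\textbf{Step 3: part (2), inclusion $\subseteq$.} Put $J=\mb{I}(V^{\tx{Gb}}_{\tx{supp}})=I+\ideala{x_i^q-x_i,C_T^q-C_T}$. By the $\fq$-Nullstellensatz, $J$ is radical. The generators $x_i^q-x_i$ lie in $J$, hence in $J:\mf{m}$. For $\Delta\in I_{\tx{minors}}$ and any $T$, part (1) gives $(C_T\Delta)^{r+1}\in C_T^{r+1}I_{\tx{minors}}\subseteq I\subseteq J$. Radicality then gives $C_T\Delta\in J$, so $\Delta\in(J:\mf{m})\cap R$.

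\textbf{Step 4: part (2), inclusion $\supseteq$.} Let $f\in(J:\mf{m})\cap R$ and $\td{a}\in V_{\tx{minors}}$. By Proposition \ref{pro:Vsup and Vmin} there is $\td{c}\neq\td{0}_N$ with $(\td{a},\td{c})\in V^{\tx{Gb}}_{\tx{supp}}$. Choose $T$ with $c_T\neq0$. Since $C_Tf\in J$ vanishes at $(\td{a},\td{c})$, we get $c_Tf(\td{a})=0$, hence $f(\td{a})=0$. So $f\in\mb{I}(V_{\tx{minors}})$, which equals $I_{\tx{minors}}+\ideala{x_i^q-x_i}$ by the $\fq$-Nullstellensatz.

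Steps 3 and 4 together give the equality in (2), and Step 2 gives (1).
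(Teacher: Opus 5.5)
Your proposal is correct and follows essentially the same route as the paper: for (1), the column relation coming from the $h_{i,T\cup\{j\}}$ combined with Claim \ref{clm:det} (the paper multiplies only the columns indexed by $S_2\setminus T$, which gives the slightly sharper power $C_T^{\#(S_2\setminus T)}$); for (2), the radicality of $I^{\tx{Gb}}_{\tx{supp}}+\ideala{x_i^q-x_i,C_T^q-C_T}$ from the $\fq$-Nullstellensatz together with Proposition \ref{pro:Vsup and Vmin}. Your elementwise step $(C_T\Delta)^{r+1}\in J\Rightarrow C_T\Delta\in J$ is a cleaner form of the paper's chain of radical inclusions, and it correctly uses $V^{\tx{Gb}}_{\tx{supp}}$ and $C_T$ where the paper's text has the typos $V_{\tx{supp}}$ and $B_T$.
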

	
	\begin{proof}
		Choose $j\notin T$, then $h_{i,T\cup\{j\}}=0$ while $i$ ranges over elements of $[m]$. We obtain
		\begin{equation*}
			C_T(\td{M_x})_{*,j}=(-1)^{\tx{sgn}(j,T)}\ssm{l\in T}(-1)^{\tx{sgn}(l,T\cup\{j\})}C_{T\cup\{j\}\setminus\{l\}}(\td{M_x})_{*,l}+
			(-1)^{\tx{sgn}(j,T)}\begin{pmatrix}
				h_{1,T\cup\{j\}}\\
				\vdots\\
				h_{m,T\cup\{j\}}
			\end{pmatrix}.
		\end{equation*}
		For any $(r+1)$-minor $|(\td{M_x})_{S_1,S_2}|$ of $\td{M_x}$ with $S_1\in\mc{P}_{r+1}(m)$ and $S_2\in\mc{P}_{r+1}(n)$,  multiplying every column of the submatrix $(\td{M_x})_{S_1,S_2}$ indexed by $j\in S_2\setminus T$ by $C_T$ and using Claim \ref{clm:minor}, we obtain
		\begin{equation*}
			B_T^{\#(S_2\setminus T)}|(\td{M_x})_{S_1,S_2}|\in\ideala{h_{i,T\cup\{j\}}:i\in S_1,j\in S_2\setminus T}\subseteq I^{\tx{Gb}}_{\tx{supp}}.
		\end{equation*}
		Hence $B_T^{r+1}I_{\tx{minors}}\subseteq I^{\tx{Gb}}_{\tx{supp}}$.
		
		To prove (2): If $f\in\fq[x_1,\dots,x_n]$ satisfies $f\mf{m}\subseteq \mb{I}(V_{\tx{supp}})$, we get $f\in \mb{I}(V_{\tx{minors}})$. This is because for any $\td{a}\in V_{\tx{minors}}$, choose non-zero $\td{c}=(c_T)_{T\in\mc{P}_r(n)}$ such that $(\td{a},\td{c})\in V_{\tx{supp}}$. Assume that $c_{T_1}\neq 0$, then $fC_{T_1}$ vanishes on $(\td{a},\td{c})$ implies that $f(\td{a})=0$. Conversely, from part (1), fix any $B_T$,
		\begin{align*}
			B_T\,\mb{I}(V_{\tx{minors}})&= \sqrt{\ideala{B_T}^{r+1}}\sqrt{\mb{I}(V_{\tx{minors}})}\subseteq\sqrt{\ideala{B_T^{r+1}}\,\mb{I}(V_{\tx{minors}})}\\&=\sqrt{\ideala{B_T^{r+1}}\,(I_{\tx{minors}}+\ideala{x_i^q-x_i:i\in[k]})}\\
			&\subseteq\sqrt{I^{\tx{Gb}}_{\tx{supp}}+\ideala{x_i^q-x_i,B_T^q-B_T:i\in[k],T\in\mc{P}_r(n)}}\\
			&=\sqrt{\mb{I}(V_{\tx{supp}})}=\mb{I}(V_{\tx{supp}}).
		\end{align*}
		Hence we prove the third assertion.
	\end{proof}
	
	\bibliography{Reg.bib}
	\bibliographystyle{alpha}
\end{document}